\newtheorem{theorem}{Theorem}[section]
\newtheorem{proposition}[theorem]{Proposition}
\newtheorem{lemma}[theorem]{Lemma}
\newtheorem{corollary}[theorem]{Corollary}
\newtheorem{remark}[theorem]{Remark}
\numberwithin{equation}{section}
\newcommand{\Bbbb}{\mathbb{B}}
\newcommand{\Rrr}{\mathbb{R}}
\newcommand{\Sss}{\mathbb{S}}
\newcommand{\Qqq}{\mathbb{Q}}
\newcommand{\Sym}{\mathfrak{S}}
\newcommand{\R}{\Rrr}
\DeclareMathOperator{\cross}{cross}
\newcommand{\hz}{\hat{0}}
\newcommand{\ho}{\hat{1}}
\newcommand{\coveredby}{\prec}
\newcommand{\Comp}{\operatorname{Comp}}
\newcommand{\Des}{\operatorname{Des}}
\newcommand{\id}{\operatorname{id}}
\newcommand{\inv}{\operatorname{inv}}
\newcommand{\type}{\operatorname{type}}
\newcommand{\cupdots}{\cup \cdots \cup}
\newcommand{\Qn}{\Pi^{\text{ord}}_{n}}
\newcommand{\vanish}[1]{}
\newcommand{\fl}{\rightarrow}
\newcommand{\ungras}{1\!\!\mkern -1mu1}
\newcommand{\iso}{\stackrel{\sim}{\fl}}
\newcommand{\lambdaweaklydecreasing}
{\lambda_{1} \geq \lambda_{2} \geq \cdots \geq \lambda_{n}}
\newcommand{\lambdaweaklyincreasing}
{\lambda_{1} \leq \lambda_{2} \leq \cdots \leq \lambda_{n}}
\newcommand{\lambdasumpositive}
{\lambda_{1} + \lambda_{2} + \cdots + \lambda_{n} > 0}
\newcommand{\lambdasumnonpositive}
{\lambda_{1} + \lambda_{2} + \cdots + \lambda_{n} \leq 0}
\begin{document}

\title{Some combinatorial identities appearing in
the calculation of the cohomology of Siegel modular varieties}

\author{{\sc Richard EHRENBORG},
        {\sc Sophie MOREL}
         and
        {\sc Margaret READDY}}

\date{}

\maketitle

\begin{abstract}
In the computation of the intersection cohomology of Shimura varieties,
or of the $L^2$ cohomology of equal rank locally symmetric spaces, 
combinatorial identities involving averaged discrete series characters of
real reductive groups play a large
technical role.
These identities can become very complicated and are
not always well-understood (see for example the appendix of~\cite{Morel}).
We propose a geometric approach to these
identities in the case of Siegel modular varieties
using the combinatorial
properties of the Coxeter complex of the symmetric group.
Apart from some introductory remarks
about the origin of the identities,
our paper is entirely combinatorial and does not require any knowledge of
Shimura varieties or of representation theory.

\vspace*{2 mm}

\noindent
{\em 2010 Mathematics Subject Classification.}
Primary 14G35, 05E45;
Secondary 14F43, 05A18, 06A07, 06A11, 52B22.


\vspace*{2 mm}

\noindent
{\em Keywords and phrases.}
Averaged discrete series characters,
permutahedron,
intersection cohomology,
ordered set partitions,
shellability.
\end{abstract}

\section{Introduction}

The goal of this paper is to give more natural and geometric proofs
of some combinatorial identities that appear when one calculates
the commuting actions of the Hecke algebra and the absolute Galois group
of~$\Qqq$ on
the cohomology of a Siegel modular variety; actually, the cohomology that is
used is
the intersection
cohomology of the minimal compactification. These identities, which
appear in the calculation of weighted orbital integrals at the real place,
are the technical heart of the paper~\cite{Morel}, but 
were relegated to an appendix and proved by brute force.

For the expert,
we next provide some details about the manner our combinatorial identities
appear in~\cite{Morel}.
The calculation in Proposition~3.3.1 of
\cite{Morel}, which is the central technical result of that
paper, requires us to identify
two virtual representations of the real points of a maximal torus $T$ of
the general symplectic group $G$. The first expression, called
$L_M(\gamma_M)$ in~\cite[Section 1.2]{Morel},
comes from a
geometric calculation using the Lefschetz fixed point formula, and involves
the action of $T$ on truncated cohomology groups of the Lie algebra of the
unipotent radical of parabolic subgroups containing $T$. 
The second
expression, which comes from Arthur and Kottwitz's expression for the
spectral side of the stable trace formula, involves averaged characters of
discrete series representations of $G(\R)$; 
see~\cite[Section 3.1]{Morel}. Both expressions can be reformulated
as linear combinations of quasi-characters of $T(\R)$, the first  via
formulas of Kostant for the Lie algebra cohomology and Weyl for the character
of an algebraic representation, and the second via Harish-Chandra's
formula for discrete series characters, made more explicit by Herb
in~\cite{Herb}; this is explained in the proof of Proposition~3.3.1
of~\cite{Morel}.
When we equate the coefficients of a fixed quasi-character
on both sides, we arrive at Corollary~A.5 of~\cite{Morel}, which follows
easily from Proposition~A.4 of that article.  The proof of that
proposition in~\cite{Morel} is long, technical and not exactly enlightening.

In this paper, we present a
geometric proof of this result (our Theorem~\ref{theorem_S_equal_T})
which involves the geometry of the
Coxeter complex of the symmetric group~$\Sym_{n}$.
This complex can also be described as the boundary
of the dual polytope of the permutahedron.
This geometric viewpoint also yields
a strengthening of Corollary~A.3 of~\cite{Morel}.
We emphasize that our approach is completely combinatorial.
Although our identities 
originally stem from the representation theory and 
arithmetic geometry used in the calculation 
of the cohomology of Shimura varieties,
the current paper is directed to a combinatorial audience.

An overview of the paper is as follows.
Section~\ref{section_preliminaries}
contains preliminaries
about the permutahedron
and shellings.
Section~\ref{section_the_weighted_complex}
gives the definition of the weighted subcomplex $\Sigma(\lambda)$ of
the Coxeter complex~$\Sigma_{n}$ of the symmetric group~$\Sym_n$
that we
wish to study.  We prove $\Sigma(\lambda)$ is a pure 
subcomplex of the same
dimension as $\Sigma_{n}$;
see Lemma~\ref{lemma_pure}.
In
Section~\ref{section_shellings_of_Coxeter_complex}
we give a brief proof of the theorem (originally
due to Bj\"orner; see~\cite{Bjorner_II}) 
that any linear extension of the weak Bruhat order is
a shelling order on the facets of $\Sigma_{n}$, 
and deduce a similar result
for~$\Sigma(\lambda)$.
Section~\ref{section_lexicographic_shelling}
yields another proof that
the weighted complex $\Sigma(\lambda)$ is shellable
by viewing it as the order complex of an
$EL$-shellable poset.
The shelling results of
Sections~\ref{section_shellings_of_Coxeter_complex}
and~\ref{section_lexicographic_shelling}
imply that
$\Sigma(\lambda)$ is always homeomorphic to a ball or a sphere, and imply
Corollary~A.3 of~\cite{Morel}, but they are much stronger than this
corollary.
In Section~\ref{section_main_result}
we state our main result, Theorem~\ref{theorem_S_equal_T},
which corresponds to Proposition~A.4 of~\cite{Morel}.
Sections~\ref{section_base_case}
and~\ref{section_permute}
contain the proof of this theorem:
in Section~\ref{section_base_case}
we give the proof in the case
$\lambda$ is weakly increasing,
and in
Section~\ref{section_permute}
we show how to reduce the general case
to this base case. Finally,
in Section~\ref{section_expression}
we derive another expression for the left-hand side of the identity of
Theorem~\ref{theorem_S_equal_T}
when $\lambda$ is a weakly decreasing sequence.

\section{Preliminaries}
\label{section_preliminaries}

\subsection{Permutations and ordered partitions}

Let $\Sym_{n}$ denote the symmetric group on $n$ elements.
We write permutations $\tau \in \Sym_{n}$ in one-line notation, that is,
$\tau = \tau_{1} \tau_{2} \cdots \tau_{n}$.
Let the symmetric group $\Sym_{n}$ act upon the
vector space~$\Rrr^{n}$ by permuting the coordinates,
that is,
given a permutation $\tau$
and the vector $x = (x_{1}, x_{2}, \ldots, x_{n})$,
we define
$\tau(x) = (x_{\tau^{-1}(1)}, x_{\tau^{-1}(2)}, \ldots, x_{\tau^{-1}(n)})$.
This is a left action, since
for two permutations $\tau$ and $\pi$ we have that
$\tau(\pi(x)) = (\tau \circ \pi)(x)$.

Define $[n]$ to be the set $\{1,2, \ldots, n\}$.
We will need the following permutation statistics.
The {\em descent set} of a permutation $\tau \in \Sym_{n}$
is the set
$\Des(\tau) = \{i \in [n-1] \: : \: \tau_{i} > \tau_{i+1}\}$.
The {\em descent composition} is the list
$(d_{1}-d_{0}, d_{2}-d_{1}, \ldots, d_{k+1}-d_{k})$
where $\Des(\tau) = \{d_{1} < d_{2} < \cdots < d_{k}\}$
and we tacitly assume $d_{0} = 0$ and $d_{k+1} = n$.
The {\em number of inversions} of a permutation, also called the
\emph{length},
is given by
$\inv(\tau) = |\{(i,j) \: : \: 1 \leq i < j \leq n, \tau_{i} > \tau_{j}\}|$.
Finally, denote the {\em sign} of a permutation $\tau$ by
$(-1)^{\tau}$, that is,
$(-1)^{\tau} = (-1)^{\inv(\tau)}$.

In the symmetric group $\Sym_{n}$
let $s_{i}$ denote the simple transposition $(i,i+1)$
where $1 \leq i \leq n-1$.
The {\em weak Bruhat order} on the symmetric group $\Sym_{n}$
is defined by the cover relation
$\tau \coveredby \tau s_{i}$ where
$\inv(\tau) < \inv(\tau s_{i})$.
More explicitly, the cover relation is
$$
\tau_{1} \cdots \tau_{i} \tau_{i+1} \cdots \tau_{n}
\coveredby
\tau_{1} \cdots \tau_{i+1} \tau_{i} \cdots \tau_{n}
\:\:\:\:
\text{ if }
\:\:\:\:
\tau_{i} < \tau_{i+1} .
$$
With respect to this partial order,
the identity element $12 \cdots n$ is the minimal element,
the permutation $n \cdots 21$ is the maximal element,
and the rank function is given by the number of inversions
of the permutation.

The $(n-1)$-dimensional
{\em permutahedron} 
is the simple polytope defined by
taking the convex hull of the $n!$ points 
$(\tau_{1}, \tau_{2}, \ldots, \tau_{n}) \in \Rrr^{n}$
where $\tau \in \Sym_{n}$.
These $n!$ points lie in the hyperplane
$\sum_{i=1}^{n} x_{i} = \binom{n+1}{2}$,
and one can verify
they generate a
polytope of dimension $n-1$.
Label the vertex
$(\tau_{1}, \tau_{2}, \ldots, \tau_{n})$ by the inverse permutation $\tau^{-1}$.
The $1$-skeleton of the $(n-1)$-dimensional permutahedron
yields the Hasse diagram of the weak Bruhat order of $\Sym_{n}$,
where the cover relations are oriented away from
the vertex labeled $12 \cdots n$ and toward the vertex labeled $n \cdots 21$.

A {\em partition} $\pi = \{B_{1}, B_{2}, \ldots, B_{k}\}$
of the set $[n]$ is a collection of subsets of~$[n]$, called {\em blocks},
such that $B_{i} \neq \emptyset$ for $1 \leq i \leq k$,
$B_{i} \cap B_{j} = \emptyset$ for $1 \leq i < j \leq k$
and
$\bigcup_{i=1}^{k} B_{i} = [n]$.
Let $\Pi_{n}$ denote
the collection of all the partitions of the set $[n]$.
We make $\Pi_{n}$ into a partially ordered set (poset)
by the cover relation
$$
  \{B_{1}, B_{2}, B_{3}, \ldots, B_{k}\}
  \coveredby
  \{B_{1} \cup B_{2}, B_{3}, \ldots, B_{k}\} .
$$
In other words, $\Pi_{n}$ is ordered by reverse refinement.
The poset $\Pi_{n}$ is in fact a lattice,
known as the partition lattice.
Furthermore, the partition lattice $\Pi_{n}$ is a graded poset,
with minimal element
$\hz = \{\{1\}, \{2\}, \ldots, \{n\}\}$,
maximal element
$\ho = \{[n]\}$
and rank function
$\rho(\pi) = n-|\pi|$,
where
$|\pi|$ denotes the number of blocks of the partition $\pi$.

An {\em ordered partition} 
$\sigma = (C_{1}, C_{2}, \ldots, C_{k})$
of the set $[n]$
is a list of subsets of $[n]$ such that
$\{C_{1}, C_{2}, \ldots, C_{k}\}$
is a partition of $[n]$.
Let $\Qn$ denote the set of all
ordered partitions on the set~$[n]$.
The set $\Qn$ forms a poset
by letting the cover relation
be the merging of two adjacent blocks, that is,
$$
(C_{1}, \ldots, C_{i}, C_{i+1}, \ldots, C_{k})
\coveredby
(C_{1}, \ldots, C_{i} \cup C_{i+1}, \ldots, C_{k}) .
$$
Observe that the maximal element of
$\Qn$ is
the ordered partition consisting of one block~$([n])$.
However there are $n!$ minimal elements,
one for each permutation $\tau = \tau_{1} \tau_{2} \cdots \tau_{n}$
in the symmetric group $\Sym_{n}$, namely
the ordered partitions of the form
$(\{\tau_{1}\}, \{\tau_{2}\}, \ldots, \{\tau_{n}\})$.
We identify these minimal elements with
permutations in~$\Sym_{n}$ written in one-line notation.
Let $|\sigma|$ denote the number of blocks of the ordered partition $\sigma$.
Also observe that every interval in $\Qn$ is isomorphic
to a Boolean algebra,
that is,
the interval $[\sigma_{1},\sigma_{2}]$
is isomorphic to the Boolean algebra $B_{|\sigma_{1}| - |\sigma_{2}|}$.
For recent work regarding ordered partitions, 
see~\cite{Ehrenborg_Hedmark,Ehrenborg_Jung}.

When we adjoin a minimal element $\hz$ to $\Qn$
the resulting poset is a lattice,
called the {\em ordered partition lattice}. In fact, it is
the face lattice of the $(n-1)$-dimensional permutahedron.

A {\em composition} of $n$ is a list 
$(c_{1}, c_{2}, \ldots, c_{k})$ of positive integers
such that $\sum_{i=1}^{k} c_{i} = n$.
Let $\Comp(n)$ denote the set of all compositions of $n$.
The set $\Comp(n)$ forms a poset by letting
the cover relation be the adding of two adjacent entries,
that is,
$$
(c_{1}, \ldots, c_{i}, c_{i+1}, \ldots, c_{k})
\coveredby
(c_{1}, \ldots, c_{i} + c_{i+1}, \ldots, c_{k}) .
$$
The poset $\Comp(n)$ is isomorphic to the Boolean algebra $B_{n-1}$.
Define the map $\type : \Qn \longrightarrow \Comp(n)$
by reading off the cardinalities of the blocks, that is,
$$ \type((C_{1}, C_{2}, \ldots, C_{k})) = (|C_{1}|, |C_{2}|, \ldots, |C_{k}|) . $$
This is an order-preserving map. 

\subsection{Shellable and decomposable simplicial complexes}

For a face $F$ of a simplicial complex $\Delta$
let $\overline{F}$ denote the subcomplex
$\{G \: : \: G \subseteq F\}$.
Recall that a pure simplicial complex $\Delta$ 
of dimension $d$ is said to be
{\em shellable} if 
it is either $0$-dimensional, or
there is an ordering of the facets $F_{1}, F_{2}, \ldots, F_{s}$
such that  the complex
$\overline{F_{j}} \cap \left(\bigcup_{i=1}^{j-1} \overline{F_{i}}\right)$
is a pure simplicial complex of dimension $d-1$ for $2 \leq j \leq s$.
As a remark, there are more general formulations of 
shellability for non-pure
complexes and polytopal complexes; see~\cite{Bjorner_Wachs}.
For a facet $F_{j}$ let $R(F_{j})$ be the 
facet restriction, that is, the smallest dimensional face
of $F_{j}$ that does not appear in the complex
$\bigcup_{i=1}^{j-1} \overline{F_{i}}$.
The shelling condition implies
that the face poset of $\Delta$ can be written as the disjoint union
$\bigcup_{j=1}^{s} [R(F_{j}), F_{j}]$.
We call the facet $F_{j}$ a {\em homology facet}
if $R(F_{j}) = F_{j}$.
It is only for homology facets that the topology
of the complex 
changes in the $j$th shelling step from
$\overline{F_{1}} \cupdots \overline{F_{j-1}}$
to
$\overline{F_{1}} \cupdots \overline{F_{j}}$.
The subcomplex
$\overline{F_{1}} \cupdots \overline{F_{j}}$
is said to be a
{\em partial shelling} 
of the complex~$\Delta$.
See~\cite{Bjorner,Stanley_green} and its references for background on shellability.

Recall that a pure simplicial complex $\Delta$ 
is {\em decomposable} if 
every facet $F_{j}$ has a face $R(F_{j})$ such that
we can write $\Delta$ as
a disjoint union
\begin{align}
\Delta & = \bigcup_{j=1}^{s} [R(F_{j}), F_{j}] .
\label{equation_decomposition}
\end{align}
Complexes satisfying this property are usually called
\emph{partitionable} (see~\cite{Stanley_green}), but we use the term ``decomposable''
here to avoid any confusion with ordered partitions.

A shellable complex is decomposable,
but the converse is not true in general.
However, the following lemma, which follows
easily from Proposition~2.5 of
\cite{Bjorner_Wachs},
yields a condition
which implies shellability.

\begin{lemma}
Let $\Delta$ be a decomposable simplicial complex
with the decomposition
given by~\eqref{equation_decomposition}.
The ordering of the facets
$F_{1}, F_{2}, \ldots, F_{s}$ is a shelling order
if and only if for every face $G$ of the facet $F_{j}$
there exist an index $i \leq j$
such that the face $G$ belongs to
the interval $[R(F_{i}), F_{i}]$.
\label{lemma_shelling_condition}
\end{lemma}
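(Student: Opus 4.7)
The plan is to prove the equivalence by analyzing how the given decomposition sits inside the filtration $\overline{F_1} \subset \overline{F_1}\cup\overline{F_2} \subset \cdots \subset \Delta$ induced by the ordering. The substantive half is the backward direction (condition $\Rightarrow$ shelling), which I will address first.

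Assuming the condition holds, I will verify the shelling axiom at step $j\geq 2$ by establishing the key identity
\[
\overline{F_j} \:\cap\: \bigcup_{i<j}\overline{F_i} \;=\; \overline{F_j} \:\setminus\: [R(F_j),F_j].
\]
For $\supseteq$, a face $G\subseteq F_j$ also contained in some $F_i$ with $i<j$ lies in $[R(F_k),F_k]$ for some $k\leq i<j$ by applying the hypothesis to $F_i$, and by disjointness of the decomposition $G$ cannot simultaneously lie in $[R(F_j),F_j]$. For $\subseteq$, a face $G$ of $F_j$ outside $[R(F_j),F_j]$ lies in some $[R(F_k),F_k]$ with $k\leq j$ by applying the hypothesis to $F_j$, and disjointness forces $k<j$, so $G\subseteq F_k$. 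The right-hand side above is the subcomplex of $\overline{F_j}$ generated by the codimension-one faces $F_j\setminus\{v\}$ for $v\in R(F_j)$, hence pure of dimension $d-1$ as required.

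For the forward direction, I observe that a shelling order produces its own canonical decomposition $\Delta = \bigsqcup_j [R'(F_j),F_j]$, where $R'(F_j)$ is the smallest face of $F_j$ not contained in any earlier $F_i$. I argue by induction on $j$ that $R'(F_j) = R(F_j)$: once this equality holds for indices less than $j$, the faces of $F_j$ outside $\bigcup_{i<j}\overline{F_i}$ — which form $[R'(F_j),F_j]$ by the shelling and also form $[R(F_j),F_j]$ by disjointness of the given decomposition — must coincide. The condition of the lemma then follows directly from the definition of $R'$.

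The main obstacle I anticipate is the forward direction: decompositions of $\Delta$ into intervals with prescribed maximal elements need not be unique in general, so identifying the given decomposition with the canonical shelling decomposition requires a careful inductive argument that interweaves the shelling property with the disjointness of the decomposition.
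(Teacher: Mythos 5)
Your ``if'' direction (condition implies shelling) is correct and is essentially the argument the paper intends: applying the hypothesis to $F_{j}$ and to the earlier facets $F_{i}$, together with disjointness, gives
\[
\overline{F_{j}} \cap \Bigl(\bigcup_{i<j}\overline{F_{i}}\Bigr)
\;=\;
\overline{F_{j}} \setminus [R(F_{j}),F_{j}]
\;=\;
\bigcup_{v\in R(F_{j})} \overline{F_{j}-\{v\}},
\]
which is pure of dimension $d-1$. (The paper's own proof reaches the same identity by inducting on $j$ with the invariant $\bigcup_{i\leq j-1}\overline{F_{i}}=\bigcup_{i\leq j-1}[R(F_{i}),F_{i}]$; your direct verification is equivalent.) Two small repairs: your labels $\subseteq$ and $\supseteq$ are interchanged, and you should note that $R(F_{j})\neq\emptyset$ for $j\geq 2$ --- the condition at $j=1$ applied to $G=\emptyset$ forces $R(F_{1})=\emptyset$, and disjointness then rules out $R(F_{j})=\emptyset$ for $j\geq 2$ --- since otherwise the displayed intersection could be empty rather than pure of dimension $d-1$.

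The ``only if'' direction as you argue it has a genuine gap, and in fact that implication is false for an arbitrary fixed decomposition. The unjustified step is the claim that the faces of $F_{j}$ outside $\bigcup_{i<j}\overline{F_{i}}$ ``also form $[R(F_{j}),F_{j}]$ by disjointness of the given decomposition'': disjointness alone yields no such thing, and the assertion is equivalent to the condition you are trying to establish at index $j$, so the induction is circular. It already fails at $j=1$, where it would force $R(F_{1})=\emptyset$. Concretely, let $\Delta$ be the boundary of the triangle on $\{a,b,c\}$ with facets ordered $\{a,b\},\{b,c\},\{c,a\}$ (a shelling order), and take the valid disjoint decomposition $[\{a\},\{a,b\}]\cup[\emptyset,\{b,c\}]\cup[\{c,a\},\{c,a\}]$; the condition fails for $G=\emptyset$ and $j=1$. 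Thus the given decomposition cannot in general be identified with the canonical shelling decomposition $R'$, and no proof of the literal ``only if'' statement can succeed. The correct formulation (Proposition~2.5 of Bj\"orner--Wachs, which the paper cites) quantifies existentially over decompositions: the order is a shelling if and only if \emph{some} decomposition satisfying the stated compatibility exists, the canonical restriction $R'$ providing one. Note that the paper only ever invokes the ``if'' direction, which is the half you have proved.
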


\vanish{
\begin{proof}
We proceed by induction on $j$.
When $j=1$ the condition implies that
$R(F_{1})$ is the empty face, and hence
$\overline{F_{1}}$ is a shellable complex.
Assume now that
$\overline{F_{1}} \cupdots \overline{F_{j-1}}$
is a shellable complex,
that is,
$\overline{F_{1}} \cupdots \overline{F_{j-1}}
=
\bigcup_{i=1}^{j-1} [R(F_{i}), F_{i}]$.
Then any face of $F_{j}$ is either already a face
of $\overline{F_{1}} \cupdots \overline{F_{j-1}}$ 
or a face of $\overline{F_{j}} - \overline{F_{1}} \cupdots \overline{F_{j-1}}$ 
Hence the intersection
$(\overline{F_{1}} \cupdots \overline{F_{j-1}}) \cap \overline{F_{j}}$
is the complement of the interval $[R(F_{j}), F_{j}]$ in~$\overline{F_{j}}$,
proving that
$F_{1}, F_{2}, \ldots, F_{j}$ is a shelling order
and completing the induction.
\end{proof}
}

For further background on combinatorial structures and posets,
see~\cite{Stanley_EC_1}.
For more on the combinatorics of simplicial complexes,
see~\cite{Stanley_green}.

\section{The weighted complex $\Sigma(\lambda)$}
\label{section_the_weighted_complex}

Let $\lambda$ be a sequence of $n$ real numbers,
that is, $\lambda = (\lambda_{1}, \lambda_{2}, \ldots, \lambda_{n}) \in \Rrr^{n}$.
For a subset $S \subseteq [n]$ we introduce the shorthand notation
$\lambda_{S} = \sum_{i \in S} \lambda_{i}$.
Define the subset $\mathcal{P}(\lambda)$ of the set of ordered 
partitions~$\Qn$ by
$$
\mathcal{P}(\lambda)
=
\left\{ \sigma = (C_{1}, C_{2}, \ldots, C_{k}) \in \Qn \:\: : \:\:
\sum_{i=1}^{j} \lambda_{C_{i}} > 0 \text{ for } 1 \leq j \leq k \right\} .
$$
Note that if $\lambdasumnonpositive$
then the set $\mathcal{P}(\lambda)$ is empty.

\begin{lemma}
The set $\mathcal{P}(\lambda)$ is an upper order ideal (also
know as a filter)
in the poset $\Qn$.
\end{lemma}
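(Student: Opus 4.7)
The plan is to verify that $\mathcal{P}(\lambda)$ is closed under going up in the cover relation of $\Qn$. Since the partial order on $\Qn$ is the transitive closure of the covering relation (merging of two adjacent blocks), it suffices to prove: if $\sigma \in \mathcal{P}(\lambda)$ and $\sigma \coveredby \sigma'$, then $\sigma' \in \mathcal{P}(\lambda)$.

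So write $\sigma = (C_{1}, \ldots, C_{i}, C_{i+1}, \ldots, C_{k})$ and $\sigma' = (C_{1}, \ldots, C_{i} \cup C_{i+1}, \ldots, C_{k})$, with $|\sigma'|=k-1$. The key observation, which drives the whole argument, is that every prefix sum of $\sigma'$ occurs verbatim as a prefix sum of $\sigma$. Concretely, for $j < i$, the $j$th prefix sum of $\sigma'$ equals the $j$th prefix sum of $\sigma$; for $j = i$, using $\lambda_{C_{i} \cup C_{i+1}} = \lambda_{C_{i}} + \lambda_{C_{i+1}}$, it equals the $(i+1)$st prefix sum of $\sigma$; and for $j > i$ the $j$th prefix sum of $\sigma'$ equals the $(j+1)$st prefix sum of $\sigma$. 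Since all prefix sums of $\sigma$ are positive by hypothesis, the same is true for $\sigma'$, so $\sigma' \in \mathcal{P}(\lambda)$.

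There is essentially no obstacle here: the argument is a direct bookkeeping of block indices combined with additivity of $\lambda_{S}$ in $S$. The only mild subtlety is making sure that the cover-relation step suffices, which follows because $\Qn$ is a graded poset (every interval is Boolean, as noted in Section~\ref{section_preliminaries}), so any $\sigma \leq \sigma'$ can be connected by a saturated chain and the argument can be iterated one cover at a time.
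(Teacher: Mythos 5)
Your proof is correct and matches the paper's argument, which is the one-line observation that merging two adjacent blocks simply removes one prefix-sum inequality while leaving the others intact. You have just spelled out the index bookkeeping that the paper leaves implicit.
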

This follows directly from the definitions
since by merging two adjacent blocks
there is one less inequality to verify.

\begin{lemma}
Given an ordered partition
$\sigma = (C_{1}, C_{2}, \ldots, C_{k}) \in \mathcal{P}(\lambda)$,
assume that $C_{j}$ is a non-singleton block.
Let $a$ be an element of the block $C_{j}$
with maximal $\lambda$-value,
that is,
$\lambda_{a} = \max_{b \in C_{j}}(\lambda_{b})$.
Let $\sigma^{\prime}$ be the ordered partition
$$
\sigma^{\prime}
=
(C_{1}, C_{2}, \ldots, C_{j-1}, \{a\}, C_{j} - \{a\}, C_{j+1}, \ldots, C_{k}) .
$$
Then the cover relation $\sigma^{\prime} \coveredby \sigma$ holds.
Furthermore, $\sigma^{\prime}$ belongs to the set $\mathcal{P}(\lambda)$.
\label{lemma_split}
\end{lemma}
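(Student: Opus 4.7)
The plan is to establish the two assertions separately, and the first is essentially a bookkeeping observation while the second reduces to a short inequality.

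First, note that $\sigma' \coveredby \sigma$ is immediate: by definition of the cover relation in $\Pi^{\text{ord}}_n$, merging the two adjacent blocks $\{a\}$ and $C_j - \{a\}$ of $\sigma'$ produces $C_j$, and the remaining blocks are unchanged, so $\sigma$ covers $\sigma'$.

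For the second assertion, I would write out the partial sums appearing in the definition of $\mathcal{P}(\lambda)$. Set $S_i = \sum_{\ell=1}^{i} \lambda_{C_\ell}$, so that $\sigma \in \mathcal{P}(\lambda)$ tells us $S_1, S_2, \ldots, S_k > 0$. Reading the blocks of $\sigma'$ in order, the partial sums for indices less than $j$ coincide with $S_1, \ldots, S_{j-1}$, and the partial sums for indices greater than $j$ coincide with $S_j, S_{j+1}, \ldots, S_k$. The only new partial sum to verify is the one obtained by adjoining the singleton block $\{a\}$, namely $S_{j-1} + \lambda_a$. (We adopt the convention $S_0 = 0$.)

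The key estimate uses the maximality of $\lambda_a$ on $C_j$: writing $m = |C_j| \geq 2$, we have $\lambda_a \geq \lambda_{C_j}/m$, so
\[
S_{j-1} + \lambda_a \;\geq\; S_{j-1} + \frac{\lambda_{C_j}}{m} \;=\; \frac{(m-1)\, S_{j-1} + S_{j}}{m}.
\]
Since $m \geq 2$, $S_{j-1} \geq 0$, and $S_j > 0$, the right-hand side is strictly positive, yielding $S_{j-1} + \lambda_a > 0$, as required.

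The only subtlety worth flagging is the boundary case $j = 1$, where $S_{j-1} = S_0 = 0$ is not strictly positive; but the displayed convex combination still works because $S_j = S_1 > 0$ carries the inequality on its own. This is the only place where any care is needed, and I expect it to be the lone minor obstacle in an otherwise routine verification.
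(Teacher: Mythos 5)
Your proof is correct and takes essentially the same route as the paper: both reduce the claim to checking the single new partial sum $S_{j-1}+\lambda_a>0$ (all other partial sums of $\sigma'$ being partial sums of $\sigma$) and exploit only the maximality of $\lambda_a$ within $C_j$. The paper closes this inequality by a case split on the sign of $\lambda_a$ (if $\lambda_a<0$ then every $\lambda$-value in $C_j$ is negative, so $\lambda_a\geq\lambda_{C_j}$), whereas you use the max-at-least-average bound $\lambda_a\geq\lambda_{C_j}/m$; both are equally valid one-line finishes, and your handling of the $j=1$ boundary case is fine.
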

\begin{proof}
The cover relation $\sigma^{\prime} \coveredby \sigma$ is immediate.
To verify that
$\sigma^{\prime} \in \mathcal{P}(\lambda)$
it is enough to verify that
$\sum_{i=1}^{j-1} \lambda_{C_{i}} + \lambda_{a} > 0$.
If $\lambda_{a} \geq 0$ this is immediately true.
If $\lambda_{a} < 0$ then
the inequality follows by noticing that all the 
$\lambda$-values associated to
the elements in the block $C_{j}$
are negative and we have that
$\sum_{i=1}^{j-1} \lambda_{C_{i}} + \lambda_{a} \geq
\sum_{i=1}^{j-1} \lambda_{C_{i}} + \lambda_{C_{j}} > 0$.
\end{proof}

Similar to the previous lemma we have the next result.
This lemma will be used in the proof of the main result
in Section~\ref{section_permute}.

\begin{lemma}
Given an ordered partition
$\sigma = (C_{1}, C_{2}, \ldots, C_{k}) \in \mathcal{P}(\lambda)$,
assume that $B$ is a non-empty proper subset of $C_{j}$,
that is,
$\emptyset \subsetneq B \subsetneq C_{j}$.
Let $\sigma^{\prime}$ be the ordered partition
$$
\sigma^{\prime}
=
\begin{cases}
(C_{1}, C_{2}, \ldots, C_{j-1}, B, C_{j} - B, C_{j+1}, \ldots, C_{k}) 
& \text{ if } \lambda_{B} > 0 , \\
(C_{1}, C_{2}, \ldots, C_{j-1}, C_{j} - B, B, C_{j+1}, \ldots, C_{k}) 
& \text{ if } \lambda_{B} \leq 0 . 
\end{cases}
$$
Then the cover relation $\sigma^{\prime} \coveredby \sigma$ holds.
Furthermore, $\sigma^{\prime}$ belongs to the set $\mathcal{P}(\lambda)$.
\label{lemma_split_block}
\end{lemma}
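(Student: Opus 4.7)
The plan is to observe that the first assertion, $\sigma' \coveredby \sigma$, is essentially free: both candidate $\sigma'$s arise from $\sigma$ by splitting the single block $C_j$ into two adjacent blocks $B$ and $C_j - B$ (in one order or the other), which is exactly a downward cover in $\Qn$. So the only real content is verifying that $\sigma' \in \mathcal{P}(\lambda)$, i.e., that every partial sum of block-$\lambda$-values in $\sigma'$ is strictly positive.

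Next I would organize the partial sums of $\sigma'$ into three groups. For indices $m < j$, the partial sum $\sum_{i=1}^{m}\lambda_{C'_i}$ coincides with the corresponding partial sum in $\sigma$, hence is positive by the hypothesis $\sigma \in \mathcal{P}(\lambda)$. For indices $m \geq j+1$, the partial sum equals $\sum_{i=1}^{m-1} \lambda_{C_i}$ if $m = j+1$ (which equals $\sum_{i=1}^{j}\lambda_{C_i}$ since the two new blocks together recover $C_j$) and in general equals a partial sum of $\sigma$; again these are positive. So the whole verification reduces to the single new partial sum at position $m = j$.

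The hard (but still very short) step is this single inequality, and it splits into two cases according to the definition of $\sigma'$. If $\lambda_B > 0$, the new partial sum is $\sum_{i=1}^{j-1}\lambda_{C_i} + \lambda_B$, and since $\sum_{i=1}^{j-1}\lambda_{C_i} \geq 0$ (it is either an empty sum, when $j=1$, or a positive partial sum of $\sigma$) and $\lambda_B > 0$, we are done. If $\lambda_B \leq 0$, the new partial sum is
\[
\sum_{i=1}^{j-1}\lambda_{C_i} + \lambda_{C_j - B}
= \Bigl(\sum_{i=1}^{j}\lambda_{C_i}\Bigr) - \lambda_B ,
\]
which is positive because $\sum_{i=1}^{j}\lambda_{C_i} > 0$ by hypothesis and $-\lambda_B \geq 0$.

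I do not anticipate any genuine obstacle; the proof is essentially a bookkeeping exercise modeled closely on Lemma~\ref{lemma_split}, with the extra wrinkle that here one has the freedom to put $B$ or $C_j - B$ first, and the case split on the sign of $\lambda_B$ is precisely what makes the new partial sum come out positive in both cases.
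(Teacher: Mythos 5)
Your proof is correct and follows essentially the same route as the paper's: the cover relation is immediate, and the only partial sum requiring verification is the new one at position $j$, handled by the same two-case computation ($\sum_{i=1}^{j-1}\lambda_{C_i}+\lambda_B>0$ when $\lambda_B>0$, and $\sum_{i=1}^{j-1}\lambda_{C_i}+\lambda_{C_j-B}=\sum_{i=1}^{j}\lambda_{C_i}-\lambda_B>0$ when $\lambda_B\leq 0$). The extra bookkeeping you include for the unchanged partial sums is left implicit in the paper but is the same argument.
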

\begin{proof}
Again, the cover relation is immediate.
The fact that $\sigma^{\prime} \in \mathcal{P}(\lambda)$
follows by the two cases:
In the case $\lambda_{B} > 0$
it is enough to observe that
$\sum_{i=1}^{j-1} \lambda_{C_{i}} + \lambda_{B} > 0$.
In the second case, we have
$\sum_{i=1}^{j-1} \lambda_{C_{i}} + \lambda_{C_{j} - B}
=
\sum_{i=1}^{j} \lambda_{C_{i}} - \lambda_{B}
 > 0$.
\end{proof}

Let $\mathcal{A}(\lambda)$ denote the subset
of the symmetric group $\Sym_{n}$
defined by
$$
\mathcal{A}(\lambda)
=
\left\{ \tau \in \Sym_{n} \:\: : \:\:
\sum_{i=1}^{j} \lambda_{\tau(i)} > 0 \text{ for } 1 \leq j\leq n \right\} .
$$
Since permutations correspond to ordered partitions
having all singleton blocks,
we observe that the set
$\mathcal{A}(\lambda)$ is a subset of
$\mathcal{P}(\lambda)$. In fact, it is the subset of minimal
elements of $\mathcal{P}(\lambda)$.

\begin{lemma}
Let $\lambda \in \Rrr^{n}$ be a sequence
such that $\lambdasumpositive$.
Then the set $\mathcal{A}(\lambda)$ is nonempty.
Especially, any permutation~$\tau$
satisfying
$\lambda_{\tau_{1}} \geq \lambda_{\tau_{2}} \geq \cdots \geq \lambda_{\tau_{n}}$ 
belongs to~$\mathcal{A}(\lambda)$.
\end{lemma}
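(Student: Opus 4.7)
The plan is to prove the second, stronger statement: every permutation $\tau$ that lists the indices in weakly decreasing order of $\lambda$-value lies in $\mathcal{A}(\lambda)$. Since such a $\tau$ exists (sort $\lambda$), this immediately gives nonemptiness of $\mathcal{A}(\lambda)$.

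Fix such a $\tau$, so that $\lambda_{\tau_{1}} \geq \lambda_{\tau_{2}} \geq \cdots \geq \lambda_{\tau_{n}}$. I would argue by contradiction: suppose some prefix sum fails to be positive, i.e.\ there is an index $j$ with $1 \leq j \leq n$ such that $\sum_{i=1}^{j} \lambda_{\tau_{i}} \leq 0$. The case $j = n$ is immediately ruled out by the hypothesis $\lambdasumpositive$, so we may assume $j < n$. The key observation is that the first $j$ entries $\lambda_{\tau_{1}}, \ldots, \lambda_{\tau_{j}}$ are the largest, and their sum is already nonpositive, so they force every later entry to be nonpositive as well.

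More precisely, because $\lambda_{\tau_{1}} \geq \cdots \geq \lambda_{\tau_{j}}$ is weakly decreasing, the smallest of the first $j$ terms satisfies
$$
\lambda_{\tau_{j}} \leq \frac{1}{j} \sum_{i=1}^{j} \lambda_{\tau_{i}} \leq 0 .
$$
By the decreasing ordering of $\lambda \circ \tau$, this gives $\lambda_{\tau_{i}} \leq \lambda_{\tau_{j}} \leq 0$ for every $i \geq j$, and hence $\sum_{i=j+1}^{n} \lambda_{\tau_{i}} \leq 0$. Adding this to the assumed inequality $\sum_{i=1}^{j} \lambda_{\tau_{i}} \leq 0$ yields $\sum_{i=1}^{n} \lambda_{i} = \sum_{i=1}^{n} \lambda_{\tau_{i}} \leq 0$, contradicting the hypothesis $\lambdasumpositive$.

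This is essentially a one-step argument, so I do not anticipate a genuine obstacle; the only thing to watch is the strict-versus-weak inequality bookkeeping, which is why I phrased the contradiction with the non-strict assumption $\sum_{i=1}^{j} \lambda_{\tau_{i}} \leq 0$ and concluded the non-strict contradiction $\sum_{i=1}^{n} \lambda_{i} \leq 0$. Since every prefix sum of $\lambda \circ \tau$ is therefore strictly positive, $\tau \in \mathcal{A}(\lambda)$, and in particular $\mathcal{A}(\lambda) \neq \emptyset$.
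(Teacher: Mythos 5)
Your proof is correct. The averaging step is valid: if $\sum_{i=1}^{j}\lambda_{\tau_{i}}\leq 0$ and the first $j$ values are the largest (weakly decreasing), then $\lambda_{\tau_{j}}$ is at most the average of the first $j$ terms, hence $\leq 0$, so every later term is $\leq 0$ and the total sum cannot be positive. This cleanly contradicts $\lambda_{1}+\lambda_{2}+\cdots+\lambda_{n}>0$, and the case $j=n$ is excluded directly by the hypothesis.

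Your route differs from the paper's in mechanics, though the underlying inequality is the same. The paper starts from the one-block ordered partition $([n])$, which lies in $\mathcal{P}(\lambda)$ because the total sum is positive, and repeatedly applies its Lemma~3.2: splitting off an element of maximal $\lambda$-value from a block preserves membership in $\mathcal{P}(\lambda)$ (if that maximal value is nonnegative the new prefix sum only grows; if it is negative, all remaining values are negative and the new prefix sum dominates a longer prefix sum, which is positive). Iterating this descent through the ordered partition poset produces exactly the sorted permutation $\tau$. Your argument is a self-contained, direct contradiction on prefix sums that avoids the ordered-partition machinery entirely; what the paper's approach buys is reuse, since Lemma~3.2 is also the engine behind Lemma~3.6 (every $\sigma\in\mathcal{P}(\lambda)$ lies above some $\tau\in\mathcal{A}(\lambda)$), whereas your argument is arguably more transparent for this particular statement.
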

\begin{proof}
Since the sum $\sum_{i=1}^{n} \lambda_{i}$ is positive,
the ordered partition $([n])$ belongs 
to~$\mathcal{P}(\lambda)$.
Now iterating Lemma~\ref{lemma_split}
we obtain that the ordered partition corresponding to
the permutation $\tau$ 
belongs to~$\mathcal{P}(\lambda)$.
\end{proof}

\begin{lemma}
The upper order ideal $\mathcal{P}(\lambda)$
is generated by the set $\mathcal{A}(\lambda)$,
that is, for every ordered partition $\sigma \in \mathcal{P}(\lambda)$
there is a permutation $\tau \in \mathcal{A}(\lambda)$
such that $\tau \leq \sigma$ in $\Qn$.
\label{lemma_P_A}
\end{lemma}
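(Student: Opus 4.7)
The plan is to prove this by iterated application of Lemma~\ref{lemma_split}, inducting on the quantity $n - |\sigma|$, which counts the number of cover relations separating $\sigma$ from a permutation (a minimal element of $\Qn$). Intuitively, we repeatedly split off the ``heaviest'' element from whichever block is not yet a singleton, and the lemma guarantees that each such step stays inside the filter $\mathcal{P}(\lambda)$.

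For the base case $n - |\sigma| = 0$, every block of $\sigma$ is a singleton, so $\sigma$ is a permutation written in one-line notation, say $\sigma = (\{\pi_{1}\}, \{\pi_{2}\}, \ldots, \{\pi_{n}\})$. The defining inequalities $\sum_{i=1}^{j} \lambda_{C_{i}} > 0$ of $\mathcal{P}(\lambda)$ then reduce \emph{exactly} to the inequalities $\sum_{i=1}^{j} \lambda_{\pi(i)} > 0$ defining $\mathcal{A}(\lambda)$. Hence $\sigma \in \mathcal{A}(\lambda)$, and we may take $\tau = \sigma$.

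For the inductive step, assume $\sigma \in \mathcal{P}(\lambda)$ has at least one non-singleton block $C_{j}$. Applying Lemma~\ref{lemma_split} to $C_{j}$ produces an ordered partition $\sigma^{\prime}$ with $\sigma^{\prime} \coveredby \sigma$, $\sigma^{\prime} \in \mathcal{P}(\lambda)$, and $|\sigma^{\prime}| = |\sigma| + 1$. By the inductive hypothesis there exists a permutation $\tau \in \mathcal{A}(\lambda)$ with $\tau \leq \sigma^{\prime}$, and then transitivity of the order on $\Qn$ yields $\tau \leq \sigma$.

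There is no real obstacle here: Lemma~\ref{lemma_split} already carries out the substantive content by certifying that one well-chosen refinement step preserves membership in $\mathcal{P}(\lambda)$, and the remaining argument is a straightforward termination on the statistic $n - |\sigma|$. The only mild point to check is that the base case is an equivalence of conditions (not merely an implication), so that an all-singleton element of $\mathcal{P}(\lambda)$ automatically lies in $\mathcal{A}(\lambda)$.
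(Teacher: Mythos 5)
Your proposal is correct and is essentially the paper's own argument: the paper also proves this by iterating Lemma~\ref{lemma_split} starting from $\sigma$ until all blocks are singletons, and your induction on $n-|\sigma|$ simply makes the termination and the base-case identification of all-singleton elements of $\mathcal{P}(\lambda)$ with elements of $\mathcal{A}(\lambda)$ explicit.
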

\begin{proof}
Begin with the ordered partition $\sigma$
and iterate Lemma~\ref{lemma_split}.
This procedure yields the permutation $\tau$.
\end{proof}

Recall that $\mathcal{P}(\lambda)$
is an upper order ideal in the poset $\Qn$, and that
$\Qn$ is a join-semilattice
where each interval is isomorphic to a Boolean algebra.
Hence by reversing the order relations of~$\Qn$
we get the face poset of a simplicial
complex, and
we can view the set $\mathcal{P}(\lambda)$
as a simplicial subcomplex
$\Sigma(\lambda)$.
We call
$\Sigma(\lambda)$
the {\em weighted complex}.
See Figure~\ref{figure_lambda_II} for an example.
The maximal element $\ho$ of $\mathcal{P}(\lambda)$
is the empty face of the simplicial complex $\Sigma(\lambda)$ 
and the minimal elements $\mathcal{A}(\lambda)$ are
the facets of~$\Sigma(\lambda)$.
Note that if $\lambdasumnonpositive$ then
$\Sigma(\lambda)$ is the empty simplicial complex,
which has no faces, not even the empty face.

This idea of turning an upper order ideal upside-down
in order to view it as a simplicial complex
appears in~\cite{Ehrenborg_Hedmark,Ehrenborg_Jung}.

Lemma~\ref{lemma_P_A}
can now be reformulated as follows.
\begin{lemma}
Let $\lambda \in \Rrr^{n}$ be a sequence
such that $\lambdasumpositive$.
Then
the weighted complex~$\Sigma(\lambda)$
is a pure simplicial complex of dimension $n-2$.
\label{lemma_pure}
\end{lemma}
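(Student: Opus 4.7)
The plan is to convert the claim about the simplicial complex $\Sigma(\lambda)$ into a statement about the poset $\mathcal{P}(\lambda)$ and then invoke Lemma~\ref{lemma_P_A}.

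First, I would fix the dimension dictionary. By construction, the face poset of $\Sigma(\lambda)$ is $\mathcal{P}(\lambda)$ with order reversed, so the maximal element $([n])$ of $\mathcal{P}(\lambda)$ becomes the empty face. A saturated chain in $\Qn$ from $([n])$ down to an ordered partition $\sigma$ with $k$ blocks has length $k-1$, so $\sigma$ corresponds to a face of $\Sigma(\lambda)$ of dimension $k-2$. In particular, the permutations in $\Sym_{n}$, which are exactly the ordered partitions with $n$ singleton blocks, correspond to faces of dimension $n-2$.

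Next, I would check non-emptiness: under the hypothesis $\lambdasumpositive$, the preceding lemma shows that $\mathcal{A}(\lambda)$ is non-empty (any permutation $\tau$ sorting $\lambda$ in weakly decreasing order belongs to $\mathcal{A}(\lambda)$), and hence so is $\mathcal{P}(\lambda) \supseteq \mathcal{A}(\lambda)$. Therefore $\Sigma(\lambda)$ has at least one face.

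The heart of the argument is then just a reinterpretation of Lemma~\ref{lemma_P_A}. The facets of $\Sigma(\lambda)$ correspond to the minimal elements of $\mathcal{P}(\lambda)$ in $\Qn$. Lemma~\ref{lemma_P_A} produces, for every $\sigma \in \mathcal{P}(\lambda)$, a permutation $\tau \in \mathcal{A}(\lambda)$ with $\tau \leq \sigma$ in $\Qn$; in particular, any minimal element of $\mathcal{P}(\lambda)$ must itself be such a $\tau$, since permutations are already the minimal elements of $\Qn$. Hence every facet of $\Sigma(\lambda)$ is a permutation, which by the dimension dictionary above is a face of dimension $n-2$. Moreover, the same lemma says every face $\sigma$ of $\Sigma(\lambda)$ is contained in some facet $\tau \in \mathcal{A}(\lambda)$, confirming purity.

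There is no real obstacle: the content of the lemma is already packed into Lemma~\ref{lemma_P_A}, and the only thing to be careful about is matching conventions, namely that order-reversal turns the top element of $\Qn$ into the empty face and turns minimal ordered partitions (the permutations) into top-dimensional facets.
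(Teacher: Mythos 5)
Your proposal is correct and is essentially the paper's own argument: the paper states this lemma as a direct reformulation of Lemma~\ref{lemma_P_A} (every $\sigma \in \mathcal{P}(\lambda)$ lies above some $\tau \in \mathcal{A}(\lambda)$, and the permutations are the minimal elements of $\Qn$, hence the top-dimensional faces after order reversal). You have simply made explicit the dimension bookkeeping and the non-emptiness check that the paper leaves implicit.
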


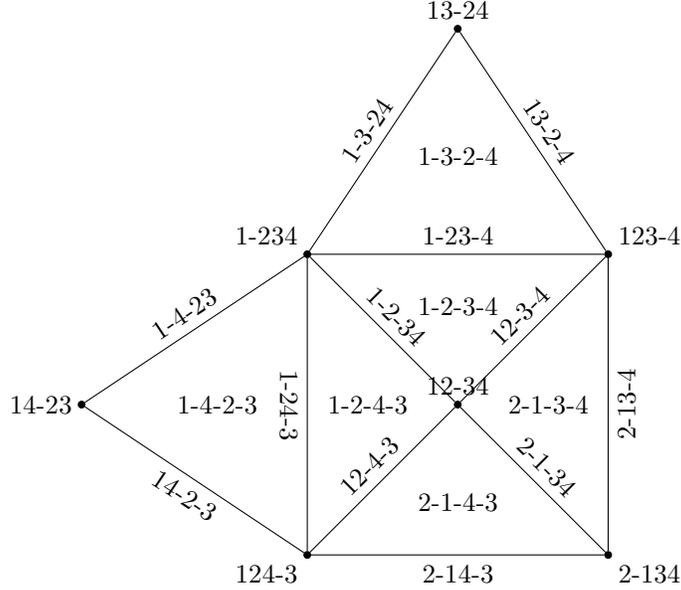
\begin{figure}
\begin{center}
\begin{tikzpicture}

\node[draw, fill=black, circle, inner sep = 0.9pt] (u) at (0, 5) {};
\node[draw, fill=black, circle, inner sep = 0.9pt] (ul) at (-2,2) {};
\node[draw, fill=black, circle, inner sep = 0.9pt] (ur) at (2,2) {};
\node[draw, fill=black, circle, inner sep = 0.9pt] (m) at (0,0) {};
\node[draw, fill=black, circle, inner sep = 0.9pt] (bl) at (-2,-2) {};
\node[draw, fill=black, circle, inner sep = 0.9pt] (br) at (2,-2) {};
\node[draw, fill=black, circle, inner sep = 0.9pt] (l) at (-5,0) {};

\draw[-]
(ul) to node [sloped,above] {\small 1-4-23}
(l) to node [sloped,below] {\small 14-2-3}
(bl) to node [sloped,below] {\small 2-14-3}
(br) to node [sloped,below] {\small 2-13-4}
(ur) to node [sloped,above] {\small 13-2-4}
(u) to node [sloped,above] {\small 1-3-24}
(ul) to node [sloped,below] {\small 1-24-3}
(bl) to node [sloped,above] {\small 12-4-3}
(m) to node [sloped,above] {\small 12-3-4}
(ur) to node [sloped,above] {\small 1-23-4}
(ul) to node [sloped,above] {\small 1-2-34}
(m) to node [sloped,above] {\small 2-1-34}
(br);

\draw (u) node [above] {\small 13-24};
\draw (ul) node [above left] {\small 1-234};
\draw (l) node [left] {\small 14-23};
\draw (bl) node [below left] {\small 124-3};
\draw (br) node [below right] {\small 2-134};
\draw (ur) node [above right] {\small 123-4};
\draw (m) node [above] {\small 12-34};

\node at (0, 1.3) {\small 1-2-3-4};
\node at (-1.2,0) {\small 1-2-4-3};
\node at (1.2,0) {\small 2-1-3-4};
\node at (0, -1.3) {\small 2-1-4-3};
\node at (-3.2,0) {\small 1-4-2-3};
\node at (0, 3.3) {\small 1-3-2-4};
\end{tikzpicture}
\end{center}
\caption{The simplicial complex $\Sigma(\lambda)$
for $\lambda = (5,1,-2,-3)$
consisting of $7$~vertices, $12$~edges and $6$~triangles.
The empty face, labeled $1234$, is not depicted.}
\label{figure_lambda_II}
\end{figure}

Reordering the entries of the sequence $\lambda$
does not change the complex as the following lemma shows.
\begin{lemma} If $\tau\in\mathfrak{S}_n$, then
the complexes $\Sigma(\lambda)$ and
$\Sigma(\tau(\lambda))$ are isomorphic
under the bijection
$\Sigma(\lambda) \longrightarrow \Sigma(\tau(\lambda))$
defined by $\sigma \longmapsto \tau(\sigma)$.
\label{lemma_reordering}
\end{lemma}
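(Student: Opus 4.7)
The plan is to reduce the statement to a direct verification that the map $\sigma \mapsto \tau(\sigma)$ preserves the cover relation on $\Pi^{\mathrm{ord}}_n$ and carries the subset $\mathcal{P}(\lambda)$ bijectively onto $\mathcal{P}(\tau(\lambda))$. First I would fix conventions: the natural action on an ordered partition $\sigma = (C_1, \ldots, C_k)$ is $\tau(\sigma) = (\tau(C_1), \ldots, \tau(C_k))$, where $\tau(C_i) = \{\tau(j) : j \in C_i\}$. Since $\tau$ is a bijection of $[n]$, the map $\sigma \mapsto \tau(\sigma)$ is a bijection of $\Pi^{\mathrm{ord}}_n$, and it clearly preserves block adjacency, hence the merging cover relation. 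So it is a poset automorphism of~$\Pi^{\mathrm{ord}}_n$.

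The key computation is an index-shift identity: for every subset $S \subseteq [n]$, one has $(\tau(\lambda))_{\tau(S)} = \lambda_S$. Indeed, using the definition $\tau(\lambda)_i = \lambda_{\tau^{-1}(i)}$,
\[
(\tau(\lambda))_{\tau(S)} = \sum_{b \in \tau(S)} \tau(\lambda)_b = \sum_{a \in S} \lambda_{\tau^{-1}(\tau(a))} = \sum_{a \in S} \lambda_a = \lambda_S.
\]
Applying this to each partial union $C_1 \cup \cdots \cup C_j$, the defining inequalities
\[
\sum_{i=1}^{j} \lambda_{C_i} > 0, \qquad 1 \leq j \leq k,
\]
for $\sigma \in \mathcal{P}(\lambda)$ translate verbatim into the defining inequalities
\[
\sum_{i=1}^{j} (\tau(\lambda))_{\tau(C_i)} > 0
\]
for $\tau(\sigma) \in \mathcal{P}(\tau(\lambda))$. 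Hence the automorphism restricts to a poset isomorphism $\mathcal{P}(\lambda) \stackrel{\sim}{\longrightarrow} \mathcal{P}(\tau(\lambda))$.

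Finally I would invoke the construction of $\Sigma(\lambda)$ as the simplicial complex obtained by reversing the order on the filter $\mathcal{P}(\lambda)$. A poset isomorphism between two filters induces an isomorphism of the associated simplicial complexes, so $\sigma \mapsto \tau(\sigma)$ yields the desired simplicial isomorphism $\Sigma(\lambda) \to \Sigma(\tau(\lambda))$. There is no real obstacle here; the entire proof amounts to unwinding the definitions, with the only substantive ingredient being the identity $(\tau(\lambda))_{\tau(S)} = \lambda_S$, which expresses the compatibility between the action on $\Rrr^n$ and the action on subsets of $[n]$.
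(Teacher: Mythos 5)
Your proof is correct and follows the same route as the paper, whose entire argument is the observation that $\tau(\lambda)_{\tau(B)} = \lambda_{B}$ for all $B \subseteq [n]$ — exactly the identity you isolate and verify. The additional details you supply (the map being a poset automorphism of $\Qn$ and the translation of the defining inequalities) are the routine unwinding the paper leaves implicit.
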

\begin{proof}
It is enough to observe that
$\tau(\lambda)_{\tau(B)} = \lambda_{B}$
for all subsets $B \subseteq [n]$.
\end{proof}

\section{Shellings of the Coxeter complex}
\label{section_shellings_of_Coxeter_complex}

Let $\Sigma_{n}$ be the simplicial complex formed by
the boundary of the dual of the $(n-1)$-dimensional permutahedron.
This is the type $A$ Coxeter complex.
The face poset of $\Sigma_{n}$ is the dual poset $(\Qn)^{*}$
with the order relation of the faces in $\Sigma_{n}$ by $\leq^{*}$,
the dual order.
The permutations in $\Sym_{n}$ correspond to
the facets of $\Sigma_{n}$.
We begin by showing that $\Sigma_{n}$ is a decomposable complex.

Recall that the descent set 
for a permutation $\tau \in \Sym_{n}$
is the set $\Des(\tau) = \{d_{1} < d_{2} < \cdots < d_{k}\}$
such that
$$
\tau_{1} < \tau_{2} < \cdots < \tau_{d_{1}}
>
\tau_{d_{1}+1} < \tau_{d_{1}+2} < \cdots < \tau_{d_{2}}
>
\tau_{d_{2}+1}
< 
\cdots 
<
\tau_{d_{k}}
>
\tau_{d_{k}+1} < \tau_{d_{k}+2} < \cdots < \tau_{n} .
$$
Define the ordered partition $R(\tau)$ to be
$$
R(\tau)
= 
(\{\tau_{1}, \tau_{2}, \ldots, \tau_{d_{1}}\},
\{\tau_{d_{1}+1}, \tau_{d_{1}+2}, \ldots, \tau_{d_{2}}\},
\ldots,
\{\tau_{d_{k}+1}, \tau_{d_{k}+2}, \ldots, \tau_{n}\}) . 
$$
Note that the blocks of $R(\tau)$ consist of the maximal ascending runs
in the permutation $\tau$.

Define the map $f : \Sigma_{n} \longrightarrow \Sym_{n}$
by taking an ordered partition,
ordering the elements in each block in increasing order and then
recording the elements as a permutation by reading from left to right.
Observe that, for every ordered partition $\sigma$, the permutation
$f(\sigma)$ is the minimal permutation in the weak Bruhat order
such that $\sigma\leq^* f(\sigma)$, and it is also the permutation of
minimal length such that $\sigma\leq^* f(\sigma)$.

The next two results are due to Bj\"orner;
see~\cite[Theorem~2.1]{Bjorner_II}.

\begin{proposition}[Bj\"orner]
The simplicial complex $\Sigma_{n}$
is decomposable, that is, it is
given by the disjoint union
\begin{equation}
     \Sigma_{n} = \bigcup_{\tau \in \Sym_{n}} [R(\tau), \tau] .
\label{equation_disjoint_union_of_weighted_complex}
\end{equation}
\label{proposition_Sigma_decomposition}
\end{proposition}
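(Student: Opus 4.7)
The plan is to exhibit each face $\sigma\in\Sigma_n$ in exactly one interval $[R(\tau),\tau]$ by taking $\tau=f(\sigma)$. Throughout, I would be careful about the two orderings: the interval $[R(\tau),\tau]$ is taken in $(\Qn)^*$, so it corresponds to the set of ordered partitions $\sigma$ with $\tau\leq\sigma\leq R(\tau)$ in $\Qn$ itself.

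First, I would show that for every $\tau\in\Sym_n$ the interval makes sense, that is, $\tau\leq R(\tau)$ in $\Qn$: indeed $R(\tau)$ is obtained from the singleton partition $(\{\tau_1\},\ldots,\{\tau_n\})$ by successively merging adjacent singletons at every ascent position of $\tau$, so we can refine $R(\tau)$ back down to $\tau$.

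Next I would prove \emph{existence}: given $\sigma=(C_1,\ldots,C_k)$, set $\tau=f(\sigma)$. By construction $\tau$ is the permutation obtained by listing the elements of each $C_j$ in increasing order and concatenating, so the ordered partition $\sigma$ is a coarsening of $\tau$, giving $\tau\leq\sigma$ in $\Qn$. The key observation is that within each block of $\sigma$ the corresponding entries of $\tau$ are strictly increasing, hence every descent of $\tau$ occurs at a boundary between two consecutive blocks of $\sigma$. Equivalently, $\Des(\tau)\subseteq\{|C_1|,|C_1|+|C_2|,\ldots\}$. This means that the ascending runs of $\tau$ are unions of consecutive blocks of $\sigma$, so $\sigma$ is a refinement of $R(\tau)$, i.e., $\sigma\leq R(\tau)$ in $\Qn$. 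Thus $\sigma\in[R(\tau),\tau]$ as an element of $(\Qn)^*$.

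For \emph{uniqueness}, suppose $\sigma\in[R(\pi),\pi]$ for some $\pi\in\Sym_n$, meaning $\pi\leq\sigma\leq R(\pi)$ in $\Qn$. The first inequality says that $\sigma$ is obtained by merging adjacent singletons in $\pi$; the second says that all such mergings stay within a single ascending run of $\pi$. Consequently each block of $\sigma$ consists of a consecutive ascending substring of $\pi$, and so sorting each block of $\sigma$ in increasing order and concatenating simply recovers $\pi$. Hence $f(\sigma)=\pi$, which pins down $\pi$ uniquely.

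The real content lies entirely in the existence half, specifically the observation that the descents of $f(\sigma)$ can only occur between blocks of $\sigma$; the rest is bookkeeping about refinement versus coarsening, and the only place one has to stay alert is the switch between the order on $\Qn$ and its dual on $\Sigma_n$.
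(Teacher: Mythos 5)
Your proof is correct and takes essentially the same route as the paper's: both arguments identify $\tau = f(\sigma)$ as the unique facet whose interval contains $\sigma$, using the fact that the descents of $f(\sigma)$ can only occur at block boundaries of $\sigma$. The paper packages the verification through an auxiliary coarsening operator $r$ satisfying $r(f(\sigma)) = r(\sigma)$ and $f(r(\sigma)) = f(\sigma)$, while you check the two inclusions and the uniqueness directly (and, if anything, spell out the uniqueness half more explicitly), but the underlying idea is identical.
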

\begin{proof}
Define the map $r : \Sigma_{n} \longrightarrow \Sigma_{n}$
by iterating the following procedure.
If $C_{i}$ and $C_{i+1}$ are two adjacent blocks in the ordered partition $\sigma$
such that $\max(C_{i}) < \min(C_{i+1})$ then
merge these two blocks together.
It is clear that we have the inequalities
$r(\sigma) \leq^{*} \sigma \leq^{*} f(\sigma)$.
Also we have that $f(r(\sigma)) = f(\sigma)$
and $r(f(\sigma)) = r(\sigma)$.
Furthermore for a permutation (facet) $\tau$ we have that
$R(\tau) = r(\tau)$.
Hence each ordered partition
$\sigma$ appears only in the interval $[r(f(\sigma)), f(\sigma)]$.
Thus each ordered partition $\sigma$ appears
exactly once 
in the union~\eqref{equation_disjoint_union_of_weighted_complex},
and hence this union is disjoint.
\end{proof}

\begin{theorem}[Bj\"orner]
Any linear extension of the weak Bruhat order is
a shelling order of the simplicial complex $\Sigma_{n}$.
\label{theorem_linear_extension_of_Bruhat_order}
\end{theorem}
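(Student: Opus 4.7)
The plan is to apply Lemma~\ref{lemma_shelling_condition} in combination with the disjoint decomposition of Proposition~\ref{proposition_Sigma_decomposition}. Fix a linear extension $\tau_{1}, \tau_{2}, \ldots, \tau_{n!}$ of the weak Bruhat order on $\Sym_{n}$. The lemma reduces the task to the following verification: for every face $\sigma$ of the facet $\tau_{j}$, there is an index $i \leq j$ such that $\sigma \in [R(\tau_{i}), \tau_{i}]$.

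Because~\eqref{equation_disjoint_union_of_weighted_complex} is a disjoint union, $\sigma$ lies in exactly one of the intervals $[R(\tau_{i}), \tau_{i}]$, and the proof of Proposition~\ref{proposition_Sigma_decomposition} identifies this $\tau_{i}$ as $f(\sigma)$. So the problem reduces to proving $f(\sigma) \leq \tau_{j}$ in the weak Bruhat order. Unwinding the definitions, $\sigma$ being a face of $\tau_{j}$ means $\tau_{j} \leq \sigma$ in $\Qn$, so the blocks of $\sigma = (C_{1}, \ldots, C_{k})$ are the consecutive runs of $\tau_{j}$ produced by this coarsening; the permutation $f(\sigma)$ is then obtained from $\tau_{j}$ by independently sorting the letters inside each block $C_{i}$ into increasing order, while preserving the order of the blocks themselves.

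The main step, and the only mildly technical point, is to verify that this block-wise sorting descends in the weak Bruhat order. I would argue this via inversion sets. Suppose $a < b$ and $a$ appears to the right of $b$ in $f(\sigma)$. Since each block of $f(\sigma)$ is internally sorted, $a$ and $b$ cannot lie in the same block, so they lie in distinct blocks with the block containing $a$ appearing after the one containing $b$. But the relative order of the blocks in $f(\sigma)$ matches that in $\tau_{j}$, so $a$ also appears to the right of $b$ in $\tau_{j}$. Hence every inversion of $f(\sigma)$ is an inversion of $\tau_{j}$, which is precisely the characterization of $f(\sigma) \leq \tau_{j}$ in the weak Bruhat order. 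The base case $j = 1$ is automatic, since any linear extension begins at $\id$ and $R(\id) = ([n])$ is the empty face of $\Sigma_{n}$, so Lemma~\ref{lemma_shelling_condition} yields the desired shelling.
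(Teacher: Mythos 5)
Your proof is correct and follows essentially the same route as the paper: it invokes Proposition~\ref{proposition_Sigma_decomposition} together with Lemma~\ref{lemma_shelling_condition} and reduces everything to showing $f(\sigma) \leq \tau_{j}$ in the weak Bruhat order. The only difference is that you spell out, via the inversion-set characterization of the weak order, the step that the paper states without justification, which is a welcome addition.
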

\begin{proof}
Fix a linear extension of the weak Bruhat order.
Denote it by $<_{e}$.
Consider a permutation~$\tau \in \Sym_{n}$.
Let $\sigma$ be an ordered partition
such that $\sigma \leq^{*} \tau$,
that is, $\sigma$ is a face of the facet~$\tau$.
Now consider the permutation $f(\sigma)$.
Note that the permutation $f(\sigma)$ is obtained from
the permutation~$\tau$ by merging elements together
to form blocks
and then sorting the elements in each block in increasing order.
Hence in the weak Bruhat order we have
$f(\sigma) \leq \tau$.
Hence $f(\sigma)$ appears earlier in the linear extension than $\tau$,
that is, $f(\sigma) \leq_{e} \tau$.
The result follows by
Lemma~\ref{lemma_shelling_condition}.
\end{proof}

We now consider our complex $\Sigma(\lambda)$.
\begin{theorem}
Let $\lambda = (\lambda_{1}, \ldots, \lambda_{n})$ be a sequence of
$n$ real numbers
such that $\lambdasumpositive$.
Then the complex $\Sigma(\lambda)$ is shellable.
Furthermore,
the complex $\Sigma(\lambda)$ is homeomorphic to a sphere or a ball
according to
$$
\Sigma(\lambda)
\cong
\begin{cases}
\Sss^{n-2} & \text{ if } \lambda_{1}, \lambda_{2}, \ldots, \lambda_{n} > 0, \\
\Bbbb^{n-2} & \text{ otherwise. }
\end{cases}
$$
\label{theorem_lots_of_shellings}
\end{theorem}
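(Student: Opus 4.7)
The plan is to adapt Bj\"orner's decomposition of $\Sigma_{n}$ from Proposition~\ref{proposition_Sigma_decomposition} to a $\lambda$-weighted analogue, obtain shellability via Lemma~\ref{lemma_shelling_condition} by mimicking the argument in Theorem~\ref{theorem_linear_extension_of_Bruhat_order}, and then identify the homeomorphism type by tracking the homology facets. First I would treat the sphere case separately: if every $\lambda_{i} > 0$, the positivity conditions defining $\mathcal{P}(\lambda)$ are automatic, so $\mathcal{P}(\lambda) = \Qn$ and $\Sigma(\lambda) = \Sigma_{n}$, which is the boundary of the dual of the $(n-1)$-permutahedron, hence homeomorphic to $\Sss^{n-2}$; shellability is Theorem~\ref{theorem_linear_extension_of_Bruhat_order}.

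For the general case (with $\lambdasumpositive$), I would define, for each $\tau \in \mathcal{A}(\lambda)$, an ordered partition $R_{\lambda}(\tau)$ obtained from $\tau$ by merging each pair of consecutive singletons $\{\tau_{i}\}, \{\tau_{i+1}\}$ whenever $(\lambda_{\tau_{i}}, -\tau_{i}) >_{\mathrm{lex}} (\lambda_{\tau_{i+1}}, -\tau_{i+1})$ -- equivalently, when $\lambda_{\tau_{i}} > \lambda_{\tau_{i+1}}$, or the two values agree and $\tau_{i} < \tau_{i+1}$. The partial block sums of $R_{\lambda}(\tau)$ then coincide with positive partial position sums of $\tau$, so $R_{\lambda}(\tau) \in \mathcal{P}(\lambda)$. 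Conversely, for $\sigma \in \mathcal{P}(\lambda)$ I would let $\tau(\sigma)$ sort each block of $\sigma$ lex-decreasingly in $(\lambda_{i}, -i)$. A concavity argument then shows $\tau(\sigma) \in \mathcal{A}(\lambda)$: inside a block $C_{j}$ the partial sums of a $\lambda$-decreasing sequence starting at $0$ and ending at $\lambda_{C_{j}}$ are bounded below by $\min(0, \lambda_{C_{j}})$, so the full partial sum of $\tau(\sigma)$ at any position in $C_{j}$ is at least $S_{j-1} + \min(0, \lambda_{C_{j}}) \in \{S_{j-1}, S_{j}\}$ where $S_{j} = \sum_{i \leq j} \lambda_{C_{i}}$, and both are positive by $\sigma \in \mathcal{P}(\lambda)$. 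Together, these two constructions yield the disjoint decomposition
\[ \mathcal{P}(\lambda) = \bigsqcup_{\tau \in \mathcal{A}(\lambda)} [R_{\lambda}(\tau), \tau] \]
in $(\Qn)^{*}$; uniqueness follows because any $\tau'$ with $\sigma \in [R_{\lambda}(\tau'), \tau']$ is forced to be lex-decreasing on each block of $\sigma$. To produce a shelling I would order $\mathcal{A}(\lambda)$ by any linear extension of the $\lambda$-weak Bruhat order whose cover relation is $\tau \lessdot_{\lambda} \tau s_{i}$ precisely when $(\lambda_{\tau_{i}}, -\tau_{i}) >_{\mathrm{lex}} (\lambda_{\tau_{i+1}}, -\tau_{i+1})$. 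For any face $\sigma$ of any $\tau \in \mathcal{A}(\lambda)$, transforming $\tau(\sigma)$ into $\tau$ by adjacent swaps within the blocks of $\sigma$ turns intra-block lex-descents into lex-ascents, each an upward move in $\leq_{\lambda}$, so $\tau(\sigma) \leq_{\lambda} \tau$; Lemma~\ref{lemma_shelling_condition} then delivers the shelling.

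Finally, for the sphere-versus-ball dichotomy I would count homology facets. A facet $\tau$ is a homology facet iff $R_{\lambda}(\tau) = \tau$, iff every consecutive pair is a lex-ascent, iff $\tau$ is the unique permutation $\tau_{0}$ sorting $(\lambda_{i}, -i)$ lex-ascendingly. Now $\tau_{0} \in \mathcal{A}(\lambda)$ iff its first partial sum $\lambda_{\tau_{0}(1)} = \min_{i} \lambda_{i}$ is positive, iff every $\lambda_{i} > 0$; moreover $\tau_{0}$ is the unique maximum of $\leq_{\lambda}$, so it comes last in any linear extension. By the standard shelling analysis, each step of adding a non-homology facet $F_{j}$ attaches $\overline{F_{j}}$ along the antistar of $R_{\lambda}(F_{j})$ in $\partial F_{j}$, which is a PL-ball inside the $(n-3)$-sphere $\partial F_{j}$, and thus preserves the property of being a PL-$(n-2)$-ball; the final step, when the homology facet is present, glues $\overline{F_{j}}$ along its entire boundary and converts the ball into an $(n-2)$-sphere. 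Hence $\Sigma(\lambda) \cong \Sss^{n-2}$ exactly when all $\lambda_{i} > 0$, and $\Sigma(\lambda) \cong \Bbbb^{n-2}$ otherwise.

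The hard part will be the joint verification of the decomposition and the shelling condition: specifically, proving $\tau(\sigma) \in \mathcal{A}(\lambda)$ -- where the concavity bound is what allows partial sums to pass through negative $\lambda$-values -- and confirming that the lexicographic tiebreak makes both the existence/uniqueness in the decomposition clean and the inequality $\tau(\sigma) \leq_{\lambda} \tau$ hold for all $\tau \in \mathcal{A}(\lambda)$ refining $\sigma$.
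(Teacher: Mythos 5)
Your argument is correct in substance and reaches the same conclusion, but it takes a genuinely different (though closely related) route from the paper. The paper first uses the relabeling isomorphism of Lemma~\ref{lemma_reordering} to reduce to weakly decreasing $\lambda$; in that normalized situation Bj\"orner's original restriction map $R(\tau)$ (ascending runs) and the map $f$ (sort blocks increasingly) already do the right thing, and the entire content of the proof is Proposition~\ref{proposition_A_lambda}: $\mathcal{A}(\lambda)$ is a lower order ideal in the ordinary weak Bruhat order, so by Proposition~\ref{proposition_Sigma_decomposition} and Theorem~\ref{theorem_linear_extension_of_Bruhat_order} the complex $\Sigma(\lambda)$ is an initial segment (partial shelling) of a shelling of $\Sigma_{n}$, and the ball/sphere dichotomy is read off from the known fact that $n\cdots 21$ is the unique homology facet of the sphere $\Sigma_{n}$ (Proposition~\ref{proposition_lots_of_shellings}). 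You instead keep $\lambda$ unsorted and conjugate the whole machinery by the total order $(\lambda_{i},-i)$: your $R_{\lambda}(\tau)$ and $\tau(\sigma)$ are the $\lambda$-adapted analogues of $R(\tau)$ and $f(\sigma)$, your order $\leq_{\lambda}$ is the weak Bruhat order transported along the sorting permutation, and your $\tau(\sigma)$ is exactly the permutation produced by iterating Lemma~\ref{lemma_split} with a fixed tiebreak, with your concavity bound playing the role of that lemma. Both routes are valid; the paper's buys brevity by quoting established facts about $\Sigma_{n}$ and pushing all the $\lambda$-dependence into the one-line swap argument of Proposition~\ref{proposition_A_lambda}, while yours avoids the relabeling step, makes the $\lambda$-dependence of the decomposition and shelling explicit, and carries out the PL ball-to-sphere bookkeeping directly rather than inheriting it from $\Sigma_{n}$.

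One small repair is needed in your verification that $\tau(\sigma)\in\mathcal{A}(\lambda)$. For $j=1$ your lower bound $S_{j-1}+\min(0,\lambda_{C_{j}})$ equals $S_{0}=0$ when $\lambda_{C_{1}}>0$, and $S_{0}$ is not positive, so as written the bound does not give strict positivity of the partial sums inside the first block. The conclusion still holds: the within-block partial sums form a concave sequence, so their minimum over the positions $1\leq t\leq |C_{j}|$ that actually occur is attained at $t=1$ or $t=|C_{j}|$; the value at $t=1$ is $S_{j-1}+\max_{b\in C_{j}}\lambda_{b}$, and $\max_{b\in C_{j}}\lambda_{b}>0$ whenever $\lambda_{C_{j}}>0$ (while if $\max_{b\in C_{j}}\lambda_{b}\leq 0$ the within-block partial sums are decreasing and everything is at least $S_{j}$). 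Stating the bound as $\min\bigl(S_{j-1}+\max_{b\in C_{j}}\lambda_{b},\,S_{j}\bigr)>0$ closes the gap.
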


The proof of Theorem~\ref{theorem_lots_of_shellings}
will appear directly
after the proof of Proposition~\ref{proposition_lots_of_shellings}.

\begin{proposition}
Let $\lambda \in \Rrr^{n}$ be a sequence such that
$\lambdaweaklydecreasing$ and
$\lambdasumpositive$.
Then the elements $\mathcal{A}(\lambda)$
form a lower order ideal with respect to the weak Bruhat order
on the symmetric group $\Sym_{n}$.
\label{proposition_A_lambda}
\end{proposition}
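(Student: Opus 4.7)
The plan is to reduce to a single downward cover in the weak Bruhat order and observe that, because $\lambda$ is weakly decreasing, the prefix-sum condition defining $\mathcal{A}(\lambda)$ can only improve when we perform an inversion-reducing adjacent transposition.

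More precisely, I would begin by noting that a subset of a graded poset is a lower order ideal if and only if it is closed under the reverse of the cover relation. So I let $\pi = \pi_{1} \pi_{2} \cdots \pi_{n} \in \mathcal{A}(\lambda)$ and suppose $\pi^{\prime}$ is covered by $\pi$ in the weak Bruhat order, which by the description in Section~\ref{section_preliminaries} means $\pi^{\prime} = \pi_{1} \cdots \pi_{i-1} \pi_{i+1} \pi_{i} \pi_{i+2} \cdots \pi_{n}$ for some index $i$ with $\pi_{i} > \pi_{i+1}$. It then suffices to verify that $\sum_{k=1}^{j} \lambda_{\pi^{\prime}_{k}} > 0$ for $1 \leq j \leq n$.

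For every index $j \neq i$ the multiset $\{\pi^{\prime}_{1}, \ldots, \pi^{\prime}_{j}\}$ equals $\{\pi_{1}, \ldots, \pi_{j}\}$, so those prefix sums are positive by the assumption $\pi \in \mathcal{A}(\lambda)$. The only prefix that genuinely changes is at $j = i$, and here
\[
\sum_{k=1}^{i} \lambda_{\pi^{\prime}_{k}}
\;=\;
\sum_{k=1}^{i-1} \lambda_{\pi_{k}} + \lambda_{\pi_{i+1}} .
\]
Since $\pi_{i} > \pi_{i+1}$ and $\lambda$ is weakly decreasing, $\lambda_{\pi_{i+1}} \geq \lambda_{\pi_{i}}$, and therefore the displayed quantity is at least $\sum_{k=1}^{i} \lambda_{\pi_{k}}$, which is positive by hypothesis. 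This gives $\pi^{\prime} \in \mathcal{A}(\lambda)$ and closes the induction.

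There is essentially no obstacle: the proof rests on the one-line identity $\lambda_{\pi_{i+1}} \geq \lambda_{\pi_{i}}$ that follows from combining $\pi_{i} > \pi_{i+1}$ with the weakly decreasing assumption on $\lambda$. The only point where one has to be a little careful is the bookkeeping that a single adjacent transposition alters exactly one prefix sum; once that is noted, the result is immediate.
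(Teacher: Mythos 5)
Your proof is correct and is essentially identical to the paper's own argument: both reduce to a single inversion-reducing adjacent transposition, note that only one prefix sum changes, and bound it from below using $\lambda_{\pi_{i+1}} \geq \lambda_{\pi_{i}}$, which follows from $\pi_{i} > \pi_{i+1}$ and the weakly decreasing hypothesis on $\lambda$. The only difference is notational (which of the two permutations in the cover relation is given a prime).
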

\begin{proof}
Assume that
we have the following cover relation in the
weak Bruhat order:
$$
\tau
= 
\tau_{1} \cdots \tau_{j} \tau_{j+1} \cdots \tau_{n}
\coveredby
\tau_{1} \cdots \tau_{j+1} \tau_{j} \cdots \tau_{n}=\tau^{\prime}, $$
where 
$\tau_{j} < \tau_{j+1}$, and suppose that $\tau^{\prime}\in\mathcal{A}(\lambda)$.
To verify that $\tau$ belongs to $\mathcal{A}(\lambda)$,
it is enough to verify that
$\sum_{i=1}^{j} \lambda_{\tau_{i}}$ is nonnegative.
Note that
$\tau_{j} < \tau_{j+1}$
implies
$\lambda_{\tau_{j}} \geq \lambda_{\tau_{j+1}}$.
So
$\sum_{i=1}^{j} \lambda_{\tau_{i}}
\geq
\lambda_{\tau_{j+1}}
+
\sum_{i=1}^{j-1} \lambda_{\tau_{i}}$
which is positive by our assumption.
Hence
$\mathcal{A}(\lambda)$ is closed under the cover relation and
hence it is a lower order ideal in the weak Bruhat order.
\end{proof}

\begin{remark}
{\rm
A similar proof also shows that $\mathcal{A}(\lambda)$ is a
lower order ideal with respect to the {\em strong} Bruhat order.
}
\end{remark}

\begin{proposition}
Let $\lambda \in \Rrr^{n}$ be a sequence such that
$\lambdaweaklydecreasing$.
Then the weighted complex $\Sigma(\lambda)$
has the decomposition
$$ \Sigma(\lambda) = \bigcup_{\tau \in \mathcal{A}(\lambda)} [R(\tau), \tau] . $$
\label{proposition_Sigma_lambda_decomposition}
\end{proposition}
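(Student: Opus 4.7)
The plan is to deduce the proposition directly from Bj\"orner's decomposition of the full Coxeter complex in Proposition~\ref{proposition_Sigma_decomposition}. That result realizes $\Sigma_{n}$ as the disjoint union $\bigcup_{\tau \in \Sym_{n}} [R(\tau),\tau]$, and its proof shows that every ordered partition $\sigma$ lies in the unique interval indexed by $\tau = f(\sigma)$. Intersecting this decomposition with $\mathcal{P}(\lambda)$, the identity to be proved reduces to the biconditional
\[
\sigma \in \mathcal{P}(\lambda) \quad \Longleftrightarrow \quad f(\sigma) \in \mathcal{A}(\lambda)
\]
for every $\sigma \in \Sigma_{n}$: from this it follows that each interval $[R(\tau),\tau]$ is either wholly contained in $\Sigma(\lambda)$ (when $\tau \in \mathcal{A}(\lambda)$) or disjoint from it, which gives the stated decomposition.

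The ``$\Leftarrow$'' direction is immediate. The map $f$ refines $\sigma$ by sorting each block in increasing order and splitting it into singletons, so $f(\sigma) \leq \sigma$ in $\Qn$. Since $\mathcal{P}(\lambda)$ is an upper order ideal of $\Qn$ and $\mathcal{A}(\lambda) \subseteq \mathcal{P}(\lambda)$, the hypothesis $f(\sigma) \in \mathcal{A}(\lambda)$ forces $\sigma \in \mathcal{P}(\lambda)$.

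The ``$\Rightarrow$'' direction is where the monotonicity hypothesis $\lambda_{1} \geq \lambda_{2} \geq \cdots \geq \lambda_{n}$ enters, and I would prove it by a greedy iteration of Lemma~\ref{lemma_split}. The essential observation is that under this monotonicity, for every non-empty subset $C \subseteq [n]$ the element of maximum $\lambda$-value in $C$ is $\min(C)$. Starting from $\sigma = (C_{1}, \ldots, C_{k}) \in \mathcal{P}(\lambda)$, I would locate the leftmost non-singleton block $C_{j}$, apply Lemma~\ref{lemma_split} with $a = \min(C_{j})$ to split $\{\min(C_{j})\}$ off in front of $C_{j} - \{\min(C_{j})\}$, and iterate. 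Lemma~\ref{lemma_split} guarantees that each step produces a cover below the previous ordered partition and remains in $\mathcal{P}(\lambda)$. After finitely many steps, every block has been sorted and split into singletons from left to right, producing precisely the permutation $f(\sigma)$; hence $f(\sigma) \in \mathcal{P}(\lambda)$, and being a minimal element of $\Qn$, $f(\sigma) \in \mathcal{A}(\lambda)$. The main obstacle is the conceptual one of recognizing this alignment between the ``maximum-$\lambda$'' prescription of Lemma~\ref{lemma_split} and the ``smallest-index-first'' behavior of $f$ under the monotonicity hypothesis; once this coincidence is noted, the proof reduces to a single greedy iteration.
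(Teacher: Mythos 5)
Your proof is correct, and it reaches the decomposition by a genuinely different mechanism for the key step. Both you and the paper start from Bj\"orner's decomposition $\Sigma_{n} = \bigcup_{\tau} [R(\tau),\tau]$ and reduce to showing that each interval is either wholly inside $\Sigma(\lambda)$ or disjoint from it; the ``all in'' half, which only needs $\mathcal{P}(\lambda)$ to be an upper order ideal, is the same in both. For the ``all out'' half (equivalently, $\sigma \in \mathcal{P}(\lambda) \Rightarrow f(\sigma) \in \mathcal{A}(\lambda)$), the paper routes through the weak Bruhat order: Proposition~\ref{proposition_A_lambda} shows that $\mathcal{A}(\lambda)$ is a lower order ideal in that order, and since $f(\sigma)$ is the Bruhat-minimal facet containing $\sigma$ while Lemma~\ref{lemma_P_A} supplies \emph{some} facet of $\Sigma(\lambda)$ containing $\sigma$, the ideal property pulls $f(\sigma)$ into $\mathcal{A}(\lambda)$. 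You instead prove the implication directly by a greedy iteration of Lemma~\ref{lemma_split}, observing that when $\lambdaweaklydecreasing$ the ``maximal $\lambda$-value'' element of a block is its minimum, so the chain of splits prescribed by that lemma terminates exactly at $f(\sigma)$ and stays in $\mathcal{P}(\lambda)$ throughout. Your route is more self-contained --- it never invokes the Bruhat order or Proposition~\ref{proposition_A_lambda} --- and it makes explicit the coincidence between the greedy split and the map $f$ that the paper leaves implicit. What the paper's route buys is economy within its own architecture: Proposition~\ref{proposition_A_lambda} is needed anyway for the shelling result (Proposition~\ref{proposition_lots_of_shellings}), so the decomposition comes essentially for free once that proposition is in place.
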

\begin{proof}
By Proposition~\ref{proposition_A_lambda}
the intersection $\Sigma(\lambda) \cap [R(\tau),\tau]$
is empty if $\tau$ is not in $\mathcal{A}(\lambda)$
and otherwise is the entire interval $[R(\tau), \tau]$.
Hence the result follows from
Proposition~\ref{proposition_Sigma_decomposition}.
\end{proof}

\begin{proposition}
Let $\lambda \in \Rrr^{n}$ be a sequence such that
$\lambdaweaklydecreasing$ and 
$\lambdasumpositive$.
Then the weighted complex~$\Sigma(\lambda)$
is a partial shelling of $\Sigma_{n}$
and hence~$\Sigma(\lambda)$ is shellable.
Furthermore,
the complex~$\Sigma(\lambda)$ is homeomorphic to a sphere or a ball
according to
$$
\Sigma(\lambda)
\cong
\begin{cases}
\Sss^{n-2} & \text{ if } \lambda_{n} > 0, \\
\Bbbb^{n-2} & \text{ if } \lambda_{n} \leq 0.
\end{cases}
$$
\label{proposition_lots_of_shellings}
\end{proposition}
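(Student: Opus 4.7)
My plan is to realize $\Sigma(\lambda)$ as an initial portion of a shelling of the full Coxeter complex $\Sigma_n$. Fix any linear extension $<_e$ of the weak Bruhat order on $\Sym_n$, and list the facets of $\Sigma_n$ in this order as $\tau^{(1)} <_e \tau^{(2)} <_e \cdots <_e \tau^{(n!)}$. Theorem~\ref{theorem_linear_extension_of_Bruhat_order} asserts that this list is a shelling of $\Sigma_n$, while Proposition~\ref{proposition_A_lambda} tells us that $\mathcal{A}(\lambda)$ is a lower order ideal of the weak Bruhat order. Hence there exists an index $s$ with $\mathcal{A}(\lambda) = \{\tau^{(1)}, \ldots, \tau^{(s)}\}$, and Proposition~\ref{proposition_Sigma_lambda_decomposition} then identifies the partial shelling $\overline{\tau^{(1)}} \cupdots \overline{\tau^{(s)}}$ with $\Sigma(\lambda)$. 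Thus $\Sigma(\lambda)$ is a partial shelling of $\Sigma_n$ and is itself shellable.

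Next I would identify the homology facets of this shelling. Since $R(\tau)$ records the maximal ascending runs of $\tau$ as its blocks, the equality $R(\tau) = \tau$ (as ordered partitions) forces every such run to be a singleton, which singles out the unique permutation $w_0 = n(n-1)\cdots 2 1$. If $\lambda_n > 0$, then the weakly decreasing hypothesis makes every entry of $\lambda$ positive, every permutation lies in $\mathcal{A}(\lambda)$, and $\Sigma(\lambda) = \Sigma_n$ is the boundary of a simplicial polytope, hence homeomorphic to $\Sss^{n-2}$. If instead $\lambda_n \leq 0$, then the first partial sum along $w_0$ equals $\lambda_n \leq 0$, so $w_0 \notin \mathcal{A}(\lambda)$, and $\Sigma(\lambda)$ is a partial shelling of $\Sigma_n$ that omits the unique homology facet.

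The main technical step, which I expect to be the real obstacle, is to upgrade the absence of homology facets to the conclusion $\Sigma(\lambda) \cong \Bbbb^{n-2}$. I would proceed by induction along the shelling: $\overline{\tau^{(1)}}$ is a single $(n-2)$-simplex and so an $(n-2)$-ball; and for $1 < j \leq s$, adjoining a facet $\tau^{(j)}$ with $R(\tau^{(j)}) \subsetneq \tau^{(j)}$ attaches a new $(n-2)$-ball along the subcomplex of $\partial \tau^{(j)}$ consisting of those faces that do not contain $R(\tau^{(j)})$, which is itself a shellable $(n-3)$-ball. Gluing two $(n-2)$-balls along a common $(n-3)$-ball in their boundaries yields another $(n-2)$-ball, so iterating through $j=2,\ldots,s$ produces $\Sigma(\lambda) \cong \Bbbb^{n-2}$. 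The precise topological gluing lemma underpinning this induction is standard in the theory of shellable pseudomanifolds and can be extracted from the results in~\cite{Bjorner_Wachs}.
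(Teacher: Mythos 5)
Your proposal follows essentially the same route as the paper: realize $\Sigma(\lambda)$ as a partial shelling of $\Sigma_{n}$ via Theorem~\ref{theorem_linear_extension_of_Bruhat_order}, Proposition~\ref{proposition_A_lambda} and Proposition~\ref{proposition_Sigma_lambda_decomposition}, then read off the topology from the fact that $\Sigma_{n}$ is a sphere whose unique homology facet $n\cdots 21$ lies in $\Sigma(\lambda)$ exactly when $\lambda_{n}>0$; you merely spell out the standard ball-gluing induction that the paper leaves implicit. One small but genuine slip: for an \emph{arbitrary} linear extension $<_{e}$ of the weak Bruhat order, a lower order ideal such as $\mathcal{A}(\lambda)$ need not be an initial segment (e.g.\ for $n=3$ the ideal $\{123,132\}$ fails to be initial in the extension beginning $123,213,\ldots$); you must instead \emph{choose} a linear extension that enumerates $\mathcal{A}(\lambda)$ first, which is always possible precisely because it is a lower order ideal. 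With that one-word repair the argument is correct.
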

\begin{proof}
The first statement follows by combining
Theorem~\ref{theorem_linear_extension_of_Bruhat_order}
and
Propositions~\ref{proposition_A_lambda}
and~\ref{proposition_Sigma_lambda_decomposition}.
The second statement follows
from the fact that $\Sigma_{n}$ is a sphere.
The only homology facet of the complex~$\Sigma_{n}$
is the permutation $n \cdots 21$
and it only appears as a facet in
$\Sigma(\lambda)$ if $\lambda_{n} > 0$.
\end{proof}

The proof of Theorem~\ref{theorem_lots_of_shellings}
now follows directly by combining
Lemma~\ref{lemma_reordering}
and
Proposition~\ref{proposition_lots_of_shellings}.
By taking the reduced Euler characteristic
of the simplicial complex $\Sigma(\lambda)$ in
Theorem~\ref{theorem_lots_of_shellings},
we obtain the following result 
that appears in~\cite[Corollary~A.3]{Morel} :

\begin{corollary}
Let $\lambda \in \Rrr^{n}$.
Then
$$
   \sum_{\sigma \in \mathcal{P}(\lambda)} (-1)^{|\sigma|} =
   \begin{cases}
   (-1)^{n}   & \text{ if  } \lambda_{1}, \lambda_{2}, \ldots, \lambda_{n} > 0,  \\
   0        & \text{ otherwise. }
   \end{cases}
$$
\end{corollary}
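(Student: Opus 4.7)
The plan is to reinterpret the left-hand side as the reduced Euler characteristic of the weighted complex $\Sigma(\lambda)$ and then quote Theorem~\ref{theorem_lots_of_shellings} to read off its value.

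First I would recall the correspondence between $\mathcal{P}(\lambda)$ and the faces of $\Sigma(\lambda)$. An ordered partition $\sigma \in \mathcal{P}(\lambda)$ with $|\sigma| = k$ blocks corresponds to a face of dimension $k-2$: the facets (dimension $n-2$) are indexed by permutations, which have $n$ singleton blocks, and the empty face (dimension $-1$) corresponds to the one-block ordered partition $([n])$, which is the maximal element of the filter $\mathcal{P}(\lambda)$.

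Next I would compute
\[
\tilde{\chi}(\Sigma(\lambda)) = \sum_{F} (-1)^{\dim F} = \sum_{\sigma \in \mathcal{P}(\lambda)} (-1)^{|\sigma| - 2} = \sum_{\sigma \in \mathcal{P}(\lambda)} (-1)^{|\sigma|},
\]
where the first sum ranges over all faces of $\Sigma(\lambda)$, including the empty face. Thus the quantity appearing in the corollary is exactly the reduced Euler characteristic of $\Sigma(\lambda)$.

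Now I would split into cases and apply Theorem~\ref{theorem_lots_of_shellings}. If $\lambda_{1} + \cdots + \lambda_{n} \leq 0$, then $\mathcal{P}(\lambda) = \emptyset$, so the sum is $0$; and since the $\lambda_{i}$ are certainly not all positive in that case, this matches the ``otherwise'' branch. If $\lambda_{1} + \cdots + \lambda_{n} > 0$ and all $\lambda_{i} > 0$, then Theorem~\ref{theorem_lots_of_shellings} gives $\Sigma(\lambda) \cong \Sss^{n-2}$, hence $\tilde{\chi}(\Sigma(\lambda)) = (-1)^{n-2} = (-1)^{n}$. Otherwise, some $\lambda_{i} \leq 0$ but the total is positive, so $\Sigma(\lambda) \cong \Bbbb^{n-2}$ and $\tilde{\chi}(\Sigma(\lambda)) = 0$.

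There is no real obstacle here since all the work is done by Theorem~\ref{theorem_lots_of_shellings}; the only thing to be careful about is the sign bookkeeping (tracking that a $k$-block ordered partition is a $(k-2)$-dimensional face, and confirming $(-1)^{k-2} = (-1)^{k}$) and the separate treatment of the case when $\mathcal{P}(\lambda)$ is empty.
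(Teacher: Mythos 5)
Your proposal is correct and is precisely the paper's own argument: the paper derives this corollary by taking the reduced Euler characteristic of $\Sigma(\lambda)$ in Theorem~\ref{theorem_lots_of_shellings}, with the same dimension count ($k$-block ordered partitions as $(k-2)$-dimensional faces) and the same case analysis, including the empty-complex case when $\lambda_{1}+\cdots+\lambda_{n}\leq 0$.
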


This corollary can also be proven by a sign reversing involution
on the set $\mathcal{P}(\lambda)$. 
Furthermore, such a sign reversing involution could
also be made into a discrete Morse matching;
see~\cite{Forman}.
However, the discrete Morse matching would only
give the weaker topologicial result that $\Sigma(\lambda)$
is either contractible when $\lambda_{i} \leq 0$
for some index $i$
or
homotopy equivalent to a sphere when
$\lambda_{1}, \lambda_{2}, \ldots, \lambda_{n} > 0$.

\section{The lexicographic shelling}
\label{section_lexicographic_shelling}

In this section we give a second proof that the weighted complex
$\Sigma(\lambda)$ is shellable by viewing it as the order
complex of an $EL$-shellable poset.
Throughout in this section we assume that the entries of $\lambda$
are pairwise distinct. We can always perturb $\lambda$
slightly to make this condition true. This does not
change the complex $\Sigma(\lambda)$.
We also assume that $\lambdasumpositive$.

Let $P$ be a graded poset  that 
has a minimal element $\hz$ and a maximal element $\ho$
and where all maximal chains have the same length.
Let $\mathcal{C}(P)$ be the set of all cover relations of $P$,
that is,
$$ \mathcal{C}(P) = \{(x,y) \in P^{2} \:\: : \:\: x \coveredby y\} . $$
A {\em labeling} of a poset $P$ is a map
$\kappa$ from the set of cover relations $\mathcal{C}(P)$
to a linearly ordered set.
For a saturated chain
$c = \{x_{0} \coveredby x_{1} \coveredby \cdots \coveredby x_{k}\}$
let $\kappa(c)$
denote the word of labels, that is,
$\kappa(c) = 
\kappa(x_{0},x_{1}) \kappa(x_{1},x_{2}) \cdots \kappa(x_{k-1},x_{k})$.
For a graded poset
$P$,
an {\em $R$-labeling} is a labeling~$\kappa$
such that in each interval $[x,y]$ there is a unique chain
$c = \{x = x_{0} \coveredby x_{1} \coveredby \cdots \coveredby x_{k} = y\}$
where the word $\kappa(c)$ is rising, that is,
$\kappa(x_{0},x_{1}) \leq \kappa(x_{1},x_{2}) \leq \cdots \leq
\kappa(x_{k-1},x_{k})$.
An {\em $EL$-labeling} is an $R$-labeling with the extra condition
that the word $\kappa(c)$
corresponding to the
unique rising chain~$c$ in the interval $[x,y]$
is also lexicographically least
among the set of
all the words arising from saturated chains in the interval.
It is
well-known that
a poset
having an $EL$-labeling
implies that the order complex $\Delta(P - \{\hz,\ho\})$
is a shellable simplicial complex; see~\cite[Section~2]{Bjorner}.

Let $B_{n}$ denote the Boolean algebra,
that is, the collection of subsets of the set $[n]$
ordered by inclusion.
Define the subposet $B(\lambda)$
by 
$$ B(\lambda) = \{ S \in B_{n} \:\: : \:\: \lambda_{S} > 0\} \cup \{\emptyset\}. $$
Note that the full set $[n]$
belongs to $B(\lambda)$ and it is the maximal element of $B(\lambda)$.

We view this definition geometrically as follows.
Identify the vertices of the $n$-dimensional cube, that is,
$\{0,1\}^{n}$ with the Boolean algebra $B_{n}$.
The subposet $B(\lambda)$ then consists of those
vertices $(x_{1}, x_{2}, \ldots, x_{n})$
that satisfy the linear inequality
$\lambda_{1} x_{1} + \lambda_{2} x_{2} + \cdots + \lambda_{n} x_{n} > 0$
as well as the zero vector.
\begin{lemma}
Let $S$ and $T$ be two subsets in the poset $B(\lambda)$
such that $T \subsetneq S$. Let $a$ be the element
in the set difference $S-T$ with the maximal $\lambda$-value.
Then the set $T \cup \{a\}$ also belongs to $B(\lambda)$.
\label{lemma_one_step}
\end{lemma}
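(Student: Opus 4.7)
The plan is to mimic the case analysis used in the proof of Lemma~\ref{lemma_split}. We must show that $\lambda_{T \cup \{a\}} = \lambda_{T} + \lambda_{a} > 0$. Since $T \subsetneq S$, the set $S$ is nonempty, so $S \in B(\lambda)$ forces $\lambda_{S} > 0$.

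First I would dispose of the case $T = \emptyset$. Here $\lambda_{T} = 0$, and we need $\lambda_{a} > 0$. If instead $\lambda_{a} \leq 0$ held, then by maximality every $b \in S = S - T$ would satisfy $\lambda_{b} \leq \lambda_{a} \leq 0$, which would force $\lambda_{S} \leq 0$, contradicting $\lambda_{S} > 0$.

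Next I would handle the case $T \neq \emptyset$, where $T \in B(\lambda)$ gives $\lambda_{T} > 0$. If $\lambda_{a} \geq 0$ we are done immediately. If $\lambda_{a} < 0$, then by maximality every $b \in S - T$ satisfies $\lambda_{b} \leq \lambda_{a} < 0$, so summing over the (nonempty) set $S - T$ yields $\lambda_{S - T} \leq \lambda_{a}$. Hence
\[
\lambda_{T} + \lambda_{a} \;\geq\; \lambda_{T} + \lambda_{S - T} \;=\; \lambda_{S} \;>\; 0,
\]
and the conclusion $T \cup \{a\} \in B(\lambda)$ follows.

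The argument is entirely routine; no step is really an obstacle. The only subtlety is noticing that when $\lambda_{a}$ is negative one must use the hypothesis $\lambda_{S} > 0$ rather than $\lambda_{T} > 0$ to obtain positivity, exactly as in Lemma~\ref{lemma_split}.
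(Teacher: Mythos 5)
Your proof is correct and follows essentially the same route as the paper, which likewise splits on the sign of $\lambda_{a}$ and, in the negative case, bounds $\lambda_{T}+\lambda_{a}$ below by $\lambda_{S}>0$ using that all elements of $S-T$ then have non-positive $\lambda$-values. Your separate treatment of $T=\emptyset$ is a small extra care the paper elides, but it does not change the argument.
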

\begin{proof}
The proof is the same as the proof of
Lemma~\ref{lemma_split}.
If $\lambda_{a} \geq 0$ then
$\lambda_{T \cup \{a\}}
=
\lambda_{T}
+
\lambda_{a}
> 0$.
If $\lambda_{a} \leq 0$ then
all the elements in $S-T$ have non-positive $\lambda$-values,
hence
$\lambda_{T \cup \{a\}} \geq \lambda_{S} > 0$.
\end{proof}

From this straightforward observation we have
the following consequences.
\begin{proposition}
The poset $B(\lambda)$ is graded of rank $n$.
Furthermore, by labeling the cover relation
$T \subsetneq S$, where $|T|+1=|S|$, 
by $-\lambda_{a}$, where $a$ is the unique element in the set difference
$S-T$, we obtain an $EL$-labeling of the poset $B(\lambda)$.
\end{proposition}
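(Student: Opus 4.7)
The plan is to extract both statements from repeated application of Lemma~\ref{lemma_one_step}. For gradedness, I would first note that Lemma~\ref{lemma_one_step} (applied to any $T \subsetneq S$ in $B(\lambda)$, including the case $T = \emptyset$) immediately rules out any cover relation $T \coveredby S$ with $|S| - |T| \geq 2$, since it inserts $T \cup \{a\} \in B(\lambda)$ strictly between $T$ and $S$. Iterating the lemma then builds, for any given $S \in B(\lambda)$, a saturated chain from $\emptyset$ to $S$ of length $|S|$ and another from $S$ to $[n]$ of length $n - |S|$, so $\rho(S) = |S|$ is a rank function and every maximal chain has length $n$.

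For the $R$-labeling property on an interval $[T, S]$, I would argue as follows. A saturated chain corresponds to an ordering $(a_{1}, \ldots, a_{k})$ of $S - T$ with all intermediate sets lying in $B(\lambda)$, and since the $\lambda_{i}$ are pairwise distinct by standing assumption, the label word $(-\lambda_{a_{1}}, \ldots, -\lambda_{a_{k}})$ is rising if and only if $\lambda_{a_{1}} > \cdots > \lambda_{a_{k}}$, which uniquely determines the ordering as the decreasing-$\lambda$ listing of $S - T$. To show that this ordering really does trace a chain in $B(\lambda)$ — the one step that substantively uses the hypothesis — I would invoke Lemma~\ref{lemma_one_step} recursively: first with the pair $(T, S)$ to obtain $T \cup \{a_{1}\} \in B(\lambda)$, then with $(T \cup \{a_{1}\}, S)$ to obtain $a_{2}$, and so on until $S$ is reached.

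The $EL$ condition should drop out of the same recursion. The lexicographically least word on any saturated chain in $[T, S]$ must begin with the smallest available label $-\lambda_{a_{1}}$, corresponding to the maximum-$\lambda$ element of $S - T$; Lemma~\ref{lemma_one_step} guarantees this initial cover is realizable, and the same argument inside $[T \cup \{a_{1}\}, S]$ constrains the remainder, so the lex-least chain coincides with the rising chain constructed above. The only genuine obstacle throughout is verifying that the decreasing-$\lambda$ ordering of $S - T$ traces a chain of elements of $B(\lambda)$, and Lemma~\ref{lemma_one_step} is precisely the tool that dispatches this at each step.
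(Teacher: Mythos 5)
Your proposal is correct and follows essentially the same route as the paper: both establish gradedness and the unique rising (and lexicographically least) chain by iterating Lemma~\ref{lemma_one_step}, which adds the elements of $S-T$ in decreasing order of $\lambda$-value. Your write-up merely spells out the uniqueness and lex-least steps in a bit more detail than the paper does.
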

\begin{proof}
By repeatedly applying Lemma~\ref{lemma_one_step}
we obtain that every interval $[T,S]$ has
a saturated chain consisting of $|S|-|T|$ steps.
Thus the poset $B(\lambda)$ has cardinality
as a rank function.
Observe that the labels of any saturated chain in the interval $[T,S]$
are the negatives
of the $\lambda$-values 
of a permutation of elements in the set difference $S-T$.
Also, the chain obtained
by repeatedly applying Lemma~\ref{lemma_one_step}
has its labels
in increasing order.
It is the only such increasing chain. Furthermore,
it is also the lexicographically least chain.
Hence the labeling is an $EL$-labeling.
\end{proof}

Finally observe that
the order complex of $B(\lambda)$,
that is,
$\Delta(B(\lambda) - \{\hz,\ho\})$,
is the complex $\Sigma(\lambda)$.
Thus we obtain that 
we can shell the facets of $\Sigma(\lambda)$
in lexicographic order.

\section{The main result}
\label{section_main_result}

Define the map
$g : \Sigma(\lambda) \longrightarrow \Sym_{n}$
by taking  an ordered partition $\sigma$,
ordering the elements in each block in decreasing order,
and then
recording the elements as a permutation by reading from left to right.
This is similar to the map $f$ defined before
Proposition~\ref{proposition_Sigma_decomposition}.
The signs of the two permutations $f(\sigma)$ and $g(\sigma)$
are related by
$$
 (-1)^{g(\sigma)} = (-1)^{\sum_{i=1}^{k} \binom{c_{i}}{2}} \cdot (-1)^{f(\sigma)} ,
$$
where $\sigma = (C_{1},C_{2}, \ldots, C_{k})$ and $c_{i} = |C_{i}|$.

Define $S(\lambda)$ to be the sum
\begin{equation}
S(\lambda)
=
\sum_{\sigma \in \Sigma(\lambda)}
(-1)^{|\sigma|} \cdot (-1)^{g(\sigma)} . 
\label{equation_S_lambda}
\end{equation}

Let $M_{n}$ be the set of all
maximal matchings
on the set $\{1,2, \ldots,n\}$.
We say that two edges $\{a,c\}$ and $\{b,d\}$ of a matching $p$
\emph{cross} if $a<b<c<d$. Let $\cross(p)$ denote the number of crossings.
Define the sign of a matching $p$ in $M_{n}$
by
$$   (-1)^{p} = (-1)^{\cross(p)} \cdot
\begin{cases}
1 & \text{ if $n$ is even,} \\
(-1)^{i-1} & \text{ if $n$ is odd,}
\end{cases}
$$
where $i$ is the unique isolated vertex if $n$ is odd.

\begin{remark}
{\rm
When $n$ is odd
there is a sign-preserving bijection
between $M_{n}$ and $M_{n+1}$
by joining the isolated vertex $i$ to the new vertex $n+1$.
}
\label{remark_add_extra_edge}
\end{remark}

Define two functions $c_{1} : \Rrr \fl \Rrr$ and
$c_{2} : \Rrr^{2} \fl \Rrr$ in the following manner:
\[c_{1}=\ungras_{\Rrr_{>0}},
\:\:\:\: \text{ and } \:\:\:\:
c_{2}(a,b)=\left\{\begin{array}{ll}1 & \mbox{ if }a,b>0, \\
2 & \mbox{ if } a > -b \geq 0, \\
0 & \mbox{ otherwise.}\end{array}\right.\]
If $e=\{i,j\}$ is an edge with $i<j$, we set
$c(e,\lambda)=c_{2}(\lambda_{i},\lambda_{j})$.

Let $m$ be the number of edges of a maximal matching,
that is, $m = \lfloor n/2 \rfloor$.
Let $p$ be a maximal matching in $M_{n}$.
If $n$ is even, we write $p$ as the collection of
edges  $\{e_{1}, e_{2}, \ldots, e_{m}\}$.
If $n$ is odd, the matching $p$
is of the form
$\{e_{1}, e_{2}, \ldots, e_{m}\}$
together with an isolated vertex $i$.
We set
$$
c(p,\lambda)=\prod_{j=1}^{m} c(e_{j},\lambda) 
\cdot
\begin{cases}
1 & \text{if $n$ is even,} \\
c_{1}(\lambda_{i}) & \text{if $n$ is odd.}
\end{cases}
$$

Finally, let
\begin{equation}
     T(\lambda)=\sum_{p\in M_{n}} (-1)^{p} \cdot c(p,\lambda) . 
\label{equation_T_lambda}
\end{equation}

\begin{remark}
{\rm
Following~\cite{Herb}
let $\Sym_{n}^{**}$
be the set of all permutations $\tau$
in the symmetric group $\Sym_{n}$
satisfying the inequalities
$\tau_{2j-1} < \tau_{2j}$ for
$1 \leq j \leq m$
and
$\tau_{1} < \tau_{3} < \cdots < \tau_{2m-1}$.
Then there is a bijection
$\Sym_{n}^{**} \iso M_{n}$
by sending $\tau$ to the matching
$p=\{\{\tau_{1},\tau_{2}\}, \{\tau_{3},\tau_{4}\},\ldots,
\{\tau_{2m-1},\tau_{2m}\}\}$.
Note that when $n$ is odd, the isolated vertex is $\tau_{n}$.
This bijection preserves the sign, that is,
$(-1)^{\tau} = (-1)^{p}$. 
}
\label{remark_Sn**}
\end{remark}

\begin{remark}
{\rm
When $n$ is even we can express the
sum $T(\lambda)$ as the Pfaffian of a skew-symmetric matrix.
Let $A$ be the skew-symmetric matrix of order $n$
with the upper triangular entries given by
$A_{i,j}=c_2(\lambda_{i},\lambda_{j})$.
The Pfaffian of $A$ is then the sum $T(\lambda)$. (See Remark \ref{remark_Sn**}.)

Similarly, when $n$ is odd, we use the bijection in
Remark~\ref{remark_add_extra_edge}.
In this case let $A$ be the skew-symmetric
matrix of order $n+1$ where
the upper triangular entries are 
$$ A_{i,j}
=
\begin{cases}
c_{2}(\lambda_{i},\lambda_{j})
& \text{ if } 1 \leq i < j \leq n , \\
c_{1}(\lambda_{i})
& \text{ if } 1 \leq i \leq n, j=n+1 .
\end{cases}
$$
Again the Pfaffian of $A$ is the sum $T(\lambda)$.
}
\end{remark}

We can now state the main result.
This
is Proposition~A.4 of~\cite{Morel}.
\begin{theorem}
For every $\lambda \in \Rrr^{n}$, we have
\[S(\lambda)=(-1)^{n} \cdot T(\lambda) . \]
\label{theorem_S_equal_T}
\end{theorem}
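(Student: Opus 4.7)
The plan follows the two-stage strategy suggested by the section titles: first prove the identity when $\lambda$ is weakly increasing, then reduce the general case to this base case by tracking how both $S$ and $T$ transform under adjacent transpositions of the entries of $\lambda$.

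For the base case, I would build a sign-reversing involution $\iota$ on $\Sigma(\lambda)$ whose fixed points are exactly the ordered partitions all of whose blocks have size $1$ or $2$ --- such partitions naturally encode a matching on $[n]$, together with a choice of ordering on its edges and singletons. The natural source of such an involution is the block-splitting/merging procedure from Lemmas~\ref{lemma_split} and~\ref{lemma_split_block}: one identifies a canonical feature (for instance, the first block of size $\geq 3$, or the first pair of adjacent blocks whose union has size $\geq 3$) and toggles it to produce a partner $\iota(\sigma)$ of opposite cardinality but with matching contribution to $(-1)^{g(\sigma)}$. After this cancellation, one groups the surviving ordered partitions by their underlying matching $p$ and checks that the sum of $(-1)^{|\sigma|}(-1)^{g(\sigma)}$ over the orderings associated to $p$ collapses to $(-1)^{n} \cdot (-1)^{p} \cdot c(p,\lambda)$: the weight $1$ in $c_2$ should come from a single surviving ordering of the pair, the weight $2$ from two, and the weight $0$ from complete cancellation between orderings. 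A pleasant simplification in the weakly increasing case is that the $c_2 = 2$ branch never triggers, since for an edge $\{i,j\}$ with $i<j$ one has $\lambda_i \leq \lambda_j$, making the condition $\lambda_i > -\lambda_j \geq 0$ impossible.

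For the reduction, I would compare both sides under swapping two adjacent entries $\lambda_i$ and $\lambda_{i+1}$. Lemma~\ref{lemma_reordering} provides the bijection $\sigma \mapsto s_i(\sigma)$ between $\Sigma(\lambda)$ and $\Sigma(s_i\lambda)$, under which I would compute how $(-1)^{g(\sigma)}$ changes --- this change depends only on the relative position of $i$ and $i+1$ within blocks of $\sigma$. A parallel analysis of $T(\lambda)$ versus $T(s_i\lambda)$, tracking how the edge weights $c_2(\lambda_i,\lambda_j)$ and $c_2(\lambda_{i+1},\lambda_j)$ change, should show that both sides transform identically. Since any $\lambda$ can be rearranged to weakly increasing form by a sequence of adjacent swaps, iterating this comparison reduces the identity to the base case.

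The main obstacle lies in the base case: arranging the involution so that the sign $(-1)^{g(\sigma)}$, the surviving weights from $c_2$, and the matching sign $(-1)^p$ all align correctly. The asymmetric three-case definition of $c_2$ and the dependence of $g(\sigma)$ on the \emph{decreasing} ordering within each block together make the sign bookkeeping delicate, and a clean choice of canonical feature for $\iota$ (for example, acting on the last splittable block, or on a lexicographically least eligible subset using Lemma~\ref{lemma_split_block}) will be essential. The reduction step will be technically lighter but still requires a careful case analysis of how the sign and weight changes on each side match when the swapped values $\lambda_i, \lambda_{i+1}$ fall into the various sign regimes appearing in $c_2$.
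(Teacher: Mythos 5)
Your overall two-stage architecture (base case for monotone $\lambda$, then adjacent transpositions) matches the paper's, but the reduction step as you describe it has a genuine gap. You claim that under the swap $\lambda \mapsto s_i\lambda$ ``both sides transform identically,'' so that iterating swaps carries the identity from the weakly increasing case to general $\lambda$. This is not true: the bijection $\sigma \mapsto s_i\sigma$ of Lemma~\ref{lemma_reordering} flips the sign $(-1)^{g(\sigma)}$ only on the ordered partitions in which $i$ and $i+1$ lie in \emph{different} blocks, while the ordered partitions in which they share a block are fixed points whose contribution is unchanged. Consequently $S(s_i\lambda)$ is not determined by $S(\lambda)$; what one actually gets is that $S(\lambda)+S(s_i\lambda)$ equals (up to sign) $2\cdot\ungras_{\lambda_i+\lambda_{i+1}>0}\cdot S(\mu)$, where $\mu\in\Rrr^{n-2}$ is $\lambda$ with the entries $\lambda_i,\lambda_{i+1}$ deleted, and similarly $T(\lambda)+T(s_i\lambda)=2\cdot\ungras_{\lambda_i+\lambda_{i+1}>0}\cdot T(\mu)$ (the matchings containing the edge $\{i,i+1\}$ are the fixed points on the $T$ side, and note $c_2(\lambda_i,\lambda_{i+1})+c_2(\lambda_{i+1},\lambda_i)=2\cdot\ungras_{\lambda_i+\lambda_{i+1}>0}$, so even $T$ does not transform ``identically''). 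The defect $S-(-1)^nT$ therefore changes sign under a swap only \emph{modulo} the corresponding defect in dimension $n-2$; to conclude you must already know the theorem for $\mu$, which forces an induction on $n$ in steps of two. Your proposal contains no such induction and never introduces $\mu$, so the iteration of swaps cannot close. This is exactly the content of Proposition~\ref{proposition_S_equal_T} and the induction in the paper's proof.

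On the base case, your plan is more ambitious than necessary and is left unexecuted at precisely the point you flag as the main obstacle. The paper does not build an involution whose survivors encode matchings; instead, for weakly increasing $\lambda$ both sides collapse to closed form. Using Lemma~\ref{lemma_reverse} and the decomposition $\Sigma(\overline{\lambda})=\bigcup_{\tau\in\mathcal{A}(\overline{\lambda})}[R(\tau),\tau]$ of Proposition~\ref{proposition_Sigma_lambda_decomposition}, the alternating sum over each Boolean interval vanishes unless $R(\tau)=\tau$, which happens only for $\tau=n\cdots21$; hence $S(\lambda)=(-1)^n$ if $\lambda_1>0$ and $0$ otherwise (Proposition~\ref{proposition_S_lambda_increasing}), while $T(\lambda)$ is $1$ or $0$ by a short direct computation (Lemma~\ref{lemma_T_two_statements}). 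Your observation that the $c_2=2$ branch never fires for weakly increasing $\lambda$ is correct but is subsumed by this. If you want to salvage your outline, keep your base case idea only as an alternative to Proposition~\ref{proposition_S_lambda_increasing}, but replace the reduction step with the fixed-point analysis above and an explicit induction on $n$.
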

The proof of Theorem~\ref{theorem_S_equal_T} will
be given
at the end of Section~\ref{section_permute}.

\section{The base case}
\label{section_base_case}

Let $\overline{\lambda}$ denote the
reverse of $\lambda$, that is,
$\overline{\lambda} = (\lambda_{n}, \ldots, \lambda_{2}, \lambda_{1})$.
\begin{lemma}
The sum $S(\lambda)$ can be expressed as
$$
\sum_{\sigma \in \Sigma(\lambda)}
(-1)^{|\sigma|} \cdot (-1)^{g(\sigma)} 
=
(-1)^{\binom{n}{2}}
\cdot
\sum_{\sigma \in \Sigma(\overline{\lambda})}
(-1)^{|\sigma|} \cdot (-1)^{f(\sigma)} . 
$$
\label{lemma_reverse}
\end{lemma}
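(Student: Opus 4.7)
The plan is to apply Lemma~\ref{lemma_reordering} with $\tau = w_{0}$ the longest element of $\Sym_{n}$, defined by $w_{0}(i) = n+1-i$. Since $w_{0}(\lambda) = \overline{\lambda}$, this produces a bijection $\sigma \longmapsto w_{0}(\sigma)$ between $\Sigma(\lambda)$ and $\Sigma(\overline{\lambda})$ that preserves the number of blocks of an ordered partition. Hence $(-1)^{|w_{0}(\sigma)|} = (-1)^{|\sigma|}$, and the proof reduces to comparing $(-1)^{g(\sigma)}$ with $(-1)^{f(w_{0}(\sigma))}$ for a fixed $\sigma \in \Sigma(\lambda)$.

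The key identity to establish is
\[
f(w_{0}(\sigma)) = w_{0} \cdot g(\sigma) \qquad \text{in } \Sym_{n} .
\]
This follows directly from the definitions of $f$ and $g$: if a block $C$ of $\sigma$ is sorted decreasingly as $c_{1} > c_{2} > \cdots > c_{m}$, then applying $w_{0}$ entrywise reverses all these inequalities, so $w_{0}(C)$ sorted increasingly is $(n+1-c_{1}, n+1-c_{2}, \ldots, n+1-c_{m})$, which is precisely $w_{0}$ applied entrywise to the corresponding block of $g(\sigma)$. Concatenating across all blocks yields the identity. Taking signs and using multiplicativity together with $(-1)^{w_{0}} = (-1)^{\binom{n}{2}}$ then gives
\[
(-1)^{f(w_{0}(\sigma))} = (-1)^{\binom{n}{2}} \cdot (-1)^{g(\sigma)} .
\]

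Substituting this relation into the defining sum~\eqref{equation_S_lambda} of $S(\lambda)$ and re-indexing via $\sigma' = w_{0}(\sigma) \in \Sigma(\overline{\lambda})$ delivers the claimed formula. I do not expect any real obstacle here: the whole argument is bookkeeping around the $w_{0}$-action on ordered partitions and permutations. The only point to verify carefully is the intertwining identity $f \circ w_{0} = w_{0} \cdot g$; everything else is formal.
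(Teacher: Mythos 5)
Your proposal is correct and follows essentially the same route as the paper: apply Lemma~\ref{lemma_reordering} with the longest element $\tau_{0}=n\cdots 21$, use the intertwining identity $f(\tau_{0}\sigma)=\tau_{0}\cdot g(\sigma)$, and conclude via $(-1)^{\tau_{0}}=(-1)^{\binom{n}{2}}$ and multiplicativity of the sign. Your block-by-block verification of the intertwining identity is a correct (and slightly more detailed) justification of the step the paper states without proof.
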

\begin{proof}
Let $\tau_{0}$ denote the longest permutation
in $\Sym_{n}$,
that is,
$\tau_{0} = n \cdots 21$.
Note that $(-1)^{\tau_{0}} = (-1)^{\binom{n}{2}}$
and $\overline{\lambda} = \tau_{0} \lambda$.
For any ordered partition $\sigma \in \Qn$ 
we have the
following equality between permutations:
$\tau_{0} g(\sigma) = f(\tau_{0} \sigma)$.
In particular, their signs agree, that is,
$(-1)^{\binom{n}{2}} \cdot (-1)^{g(\sigma)}
= 
(-1)^{f(\tau_{0} \sigma)}$.
Now apply Lemma~\ref{lemma_reordering}
using the permutation $\tau_{0}$.
The result follows after summing over all faces
$\sigma \in \Sigma(\lambda)$.
\end{proof}

\begin{proposition}
Let $\lambda \in \Rrr^{n}$ be a sequence such that
$\lambdaweaklyincreasing$.
Then the sum $S(\lambda)$ is given by
$$
S(\lambda)
=
\begin{cases}
(-1)^{n} & \text{ if } \lambda_{1} > 0, \\
0 & \text{ otherwise.}
\end{cases}
$$ 
\label{proposition_S_lambda_increasing}
\end{proposition}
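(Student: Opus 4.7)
The plan is to combine Lemma~\ref{lemma_reverse}, which converts $S(\lambda)$ into a sum involving $f$ instead of $g$ but on $\Sigma(\overline{\lambda})$, with the shelling decomposition from Proposition~\ref{proposition_Sigma_lambda_decomposition}. Since $\lambda$ is weakly increasing, $\overline{\lambda}$ is weakly decreasing, so Proposition~\ref{proposition_Sigma_lambda_decomposition} applies and we can write
\[
S(\lambda)
= (-1)^{\binom{n}{2}}
\sum_{\tau \in \mathcal{A}(\overline{\lambda})} (-1)^{\tau}
\sum_{\sigma \in [R(\tau),\tau]} (-1)^{|\sigma|},
\]
after observing that $f(\sigma)=\tau$ is constant on each interval $[R(\tau),\tau]$, since each block of such a $\sigma$ is a union of consecutive blocks of $\tau$ lying inside an ascending run and hence already appears sorted.

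Next I would evaluate the inner sum. If $\tau$ has $k$ descents, then $R(\tau)$ has $k+1$ blocks and the interval $[R(\tau),\tau]$ in the face poset of $\Sigma_{n}$ is a Boolean algebra of rank $n-1-k$: the $n-1-k$ ascents of $\tau$ correspond to independent choices of whether or not to split. Writing $|\sigma|=k+1+j$ where $j$ is the number of ascents kept as cuts gives
\[
\sum_{\sigma \in [R(\tau),\tau]} (-1)^{|\sigma|}
= (-1)^{k+1}\sum_{j=0}^{n-1-k}\binom{n-1-k}{j}(-1)^{j}
= (-1)^{k+1}\cdot 0^{\,n-1-k},
\]
which vanishes unless $k=n-1$, i.e., unless $\tau$ is the reverse permutation $\tau_{0}=n(n-1)\cdots 1$, in which case it equals $(-1)^{n}$.

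Finally I would determine when $\tau_{0}$ lies in $\mathcal{A}(\overline{\lambda})$. Since $\overline{\lambda}_{\tau_{0}(i)}=\overline{\lambda}_{n-i+1}=\lambda_{i}$, this happens precisely when $\sum_{i=1}^{j}\lambda_{i}>0$ for every $j$, which under the weakly increasing assumption is equivalent to $\lambda_{1}>0$. When this holds, the single surviving term contributes $(-1)^{\binom{n}{2}}\cdot(-1)^{\tau_{0}}\cdot(-1)^{n}=(-1)^{\binom{n}{2}}\cdot(-1)^{\binom{n}{2}}\cdot(-1)^{n}=(-1)^{n}$; when it fails, either $\mathcal{A}(\overline{\lambda})$ is empty (if $\sum_{i}\lambda_{i}\le 0$, in which case $\Sigma(\lambda)$ is empty and $S(\lambda)=0$ trivially) or it simply does not contain $\tau_{0}$, so every term vanishes and $S(\lambda)=0$.

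The main obstacle is the verification that $f$ is constant on the intervals of the decomposition and the clean Boolean-algebra description of these intervals; once that is in place, the rest is a binomial cancellation. No additional machinery beyond Lemmas~\ref{lemma_reverse} and~\ref{lemma_reordering} and Propositions~\ref{proposition_A_lambda} and~\ref{proposition_Sigma_lambda_decomposition} is needed.
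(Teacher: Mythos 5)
Your proposal is correct and follows essentially the same route as the paper's own proof: apply Lemma~\ref{lemma_reverse} to pass to $\Sigma(\overline{\lambda})$ with the statistic $f$, use the decomposition of Proposition~\ref{proposition_Sigma_lambda_decomposition}, note that $(-1)^{f(\sigma)}$ is constant on each interval $[R(\tau),\tau]$ so that only the case $R(\tau)=\tau$ (i.e., $\tau=n\cdots 21$) survives, and check that this facet lies in $\mathcal{A}(\overline{\lambda})$ exactly when $\lambda_{1}>0$. Your explicit Boolean-interval/binomial computation of the inner sum is just a spelled-out version of the paper's appeal to the vanishing alternating sum over a nontrivial Boolean algebra, and all your sign bookkeeping checks out.
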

\begin{proof}
Lemma~\ref{lemma_reverse}
asserts that
$$
S(\lambda)
=
(-1)^{\binom{n}{2}}
\cdot
\sum_{\sigma\in\Sigma(\overline{\lambda})}
(-1)^{|\sigma|} \cdot (-1)^{f(\sigma)}.
$$
Moreover,
the entries of 
$\overline{\lambda}$
are in weakly decreasing order.
Hence we can use
the decomposition
in Proposition~\ref{proposition_Sigma_lambda_decomposition}
for the complex 
$\Sigma(\overline{\lambda})$,
that is, we have
$$
\Sigma(\overline{\lambda})
=
\bigcup_{\tau \in \mathcal{A}(\overline{\lambda})} [R(\tau), \tau] .
$$
Observe the function
$(-1)^{f(\sigma)}$
is constant on the interval
$[R(\tau), \tau]$.
Hence the sum
$$ \sum_{\sigma \in [R(\tau), \tau]} (-1)^{|\sigma|} \cdot (-1)^{f(\sigma)} $$
is zero unless $R(\tau) = \tau$.
But we can only have $R(\tau)=\tau$
when $\tau$ is the longest permutation
$n \cdots 21$. For this permutation
the sum is $(-1)^{n} \cdot (-1)^{\binom{n}{2}}$.
This permutation occurs in $\mathcal{A}(\overline{\lambda})$
if and only if $\lambda_{1} > 0$.
Hence the result follows.
\end{proof}

\begin{lemma}
The following two evaluations of $T(\lambda)$ hold:
\begin{itemize}
\item[(1)] If $\lambda_{1},\lambda_{2},\ldots,\lambda_{n}>0$, then $T(\lambda)=1$.
\item[(2)] If $\lambda_{1}\leq 0$, then $T(\lambda)=0$.
\end{itemize}
\label{lemma_T_two_statements}
\end{lemma}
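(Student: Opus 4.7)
I would dispatch the two cases separately: (2) follows from a direct vanishing observation on the factors $c_{1}$ and $c_{2}$, while (1) reduces to a standard Pfaffian evaluation.

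For part (2), the key point is that under the hypothesis $\lambda_{1} \leq 0$, every contribution to $T(\lambda)$ involving vertex~$1$ vanishes. Indeed, $c_{1}(\lambda_{1}) = \ungras_{\R_{>0}}(\lambda_{1}) = 0$, and for every $j > 1$ the value $c_{2}(\lambda_{1},\lambda_{j})$ is also zero: the case $\lambda_{1},\lambda_{j} > 0$ is excluded since $\lambda_{1} \leq 0$, and the case $\lambda_{1} > -\lambda_{j} \geq 0$ requires $\lambda_{1} > 0$, which is also excluded. In any maximal matching $p \in M_{n}$, vertex~$1$ is either isolated (only possible when $n$ is odd, in which case $c(p,\lambda)$ carries the factor $c_{1}(\lambda_{1}) = 0$) or paired with some $j > 1$ (in which case $c(p,\lambda)$ carries the factor $c_{2}(\lambda_{1},\lambda_{j}) = 0$). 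Either way $c(p,\lambda) = 0$, and summing over $p$ gives $T(\lambda) = 0$.

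For part (1), when every $\lambda_{i} > 0$, all occurrences of $c_{1}$ and $c_{2}$ in the product defining $c(p,\lambda)$ evaluate to $1$, so $T(\lambda) = \sum_{p \in M_{n}} (-1)^{p}$. The plan is to show this sign-weighted enumeration equals $1$. Using the sign-preserving bijection of Remark~\ref{remark_add_extra_edge}, I may reduce to the case of even $n$, where by the Pfaffian remark following the definition of $T(\lambda)$ the sum is the Pfaffian of the skew-symmetric matrix $A_{n}$ with $(A_{n})_{i,j} = 1$ for all $i < j$. I would then compute this Pfaffian by induction on $n/2$: expanding along the first row yields
\[
\text{Pf}(A_{n}) \;=\; \sum_{j=2}^{n} (-1)^{j} \cdot (A_{n})_{1,j} \cdot \text{Pf}(A_{n}^{(1,j)}),
\]
where each principal submatrix $A_{n}^{(1,j)}$ obtained by deleting rows and columns $1$ and $j$ is again an all-ones-above-diagonal skew-symmetric matrix of order $n-2$. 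Since $\sum_{j=2}^{n}(-1)^{j} = 1$ for $n$ even, the induction hypothesis $\text{Pf}(A_{n-2}) = 1$ yields $\text{Pf}(A_{n}) = 1$, with base case $\text{Pf}(A_{2}) = 1$.

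The only real subtlety, rather than a conceptual obstacle, is sign bookkeeping: one must confirm that the extra factor $(-1)^{i-1}$ in the definition of $(-1)^{p}$ for odd~$n$ is absorbed correctly by the bijection $M_{n} \iso M_{n+1}$ of Remark~\ref{remark_add_extra_edge}, and that the sign convention in the Pfaffian cofactor expansion agrees with $(-1)^{\cross(p)}$. Both agreements are routine but should be verified before invoking the Pfaffian interpretation of $T(\lambda)$.
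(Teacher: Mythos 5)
Your proposal is correct and follows essentially the same route as the paper: part (2) is the identical vanishing argument on the factors $c_{1}(\lambda_{1})$ and $c_{2}(\lambda_{1},\lambda_{j})$, and part (1) is the same induction removing vertex $1$ and its partner $j$ with sign $(-1)^{j}$ and summing $\sum_{j=2}^{n}(-1)^{j}=1$. The only cosmetic difference is that you package the inductive step as the Pfaffian cofactor expansion along the first row, whereas the paper proves the underlying sign fact (the parity of the crossings involving the edge $\{1,j\}$ equals the parity of $j$) directly on matchings.
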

\begin{proof}
We begin by proving the first statement.
By the hypothesis on $\lambda$, for every maximal matching~$p$,
we have $c(p,\lambda)=1$.
We prove 
$T(\lambda) = 1$ for all even $n$ by induction.
The induction basis $n=2$ is straightforward to check
since there is only one maximal matching on two vertices.
Assume now it is true for $n-2$.
Observe that the parity of the number 
of crossings to which the edge $\{1,j\}$ contributes is
the same as the parity of~$j$. 
Let $q \in M_{n-2}$ be the matching obtained from $p$ by removing
the vertices $1$ and $j$, and then applying 
the unique order-preserving relabeling of the remaining vertices.
This gives a bijection from the set of maximal matchings $p\in M_{n}$ such that
$1$ and $j$ are matched and the set of maximal matchings $q\in M_{n-2}$, and we
have $(-1)^p=(-1)^j (-1)^q$.
Hence the sum is given by
$$
T(\lambda)
=
\sum_{p \in M_{n}} (-1)^{p}
=
\sum_{j=2}^{n}  (-1)^{j} \cdot \sum_{q \in M_{n-2}} (-1)^{q}
=
\sum_{j=2}^{n}  (-1)^{j} 
=
1 ,
$$
where the third step is the induction hypothesis.

Assume now that $n$ is odd.
By using the sign-preserving bijection in
Remark~\ref{remark_add_extra_edge}
between $M_{n}$ and~$M_{n+1}$,
we obtain that
the two sums
$\sum_{p \in M_{n}} (-1)^{p}$
and
$\sum_{r \in M_{n+1}} (-1)^{r}$
are equal, and we have already seen that the second sum is equal to $1$.

We next prove the second statement.
Suppose that $\lambda_{1} \leq 0$. 
Then for every maximal matching~$p$, we claim $c(p,\lambda)=0$. 
Indeed, if the vertex $1$ is isolated then
$c_{1}(\lambda_{1}) = 0$ is a factor of $c(p, \lambda)$.
If not, let $e = \{1,j\}$ be the edge of $p$ containing the vertex $1$.
The contribution of $e$ to $c(p,\lambda)$ is $c_{2}(\lambda_{1},\lambda_{j})=0$
which is independent of the value of $\lambda_{j}$.
In both cases, $c(p,\lambda)$ has a factor equal to~$0$, so it is~$0$. 
\end{proof}

By combining
Proposition~\ref{proposition_S_lambda_increasing}
and
Lemma~\ref{lemma_T_two_statements},
we obtain the following corollary.
\begin{corollary} 
If $\lambda_{1} \leq \lambda_{2} \leq \cdots \leq \lambda_{n}$, then
$S(\lambda)=(-1)^{n} \cdot  T(\lambda)$.
\label{corollary_base_case}
\end{corollary}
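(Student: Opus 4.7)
The plan is simply to pair up the two preceding results case by case, exploiting the fact that under the hypothesis $\lambda_{1} \leq \lambda_{2} \leq \cdots \leq \lambda_{n}$ the sign of $\lambda_{1}$ controls the sign of every entry. First I would split on whether $\lambda_{1} > 0$ or $\lambda_{1} \leq 0$; the weakly increasing hypothesis guarantees that these are exactly the two regimes appearing in the hypotheses of Proposition~\ref{proposition_S_lambda_increasing} and Lemma~\ref{lemma_T_two_statements}.

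In the case $\lambda_{1} > 0$, the monotonicity forces $\lambda_{1}, \lambda_{2}, \ldots, \lambda_{n} > 0$, so Lemma~\ref{lemma_T_two_statements}(1) yields $T(\lambda) = 1$, while Proposition~\ref{proposition_S_lambda_increasing} yields $S(\lambda) = (-1)^{n}$. Both sides of the claimed identity therefore equal $(-1)^{n}$. In the complementary case $\lambda_{1} \leq 0$, Lemma~\ref{lemma_T_two_statements}(2) gives $T(\lambda) = 0$ and Proposition~\ref{proposition_S_lambda_increasing} gives $S(\lambda) = 0$, so both sides equal zero.

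Since the cases exhaust all possibilities, the identity $S(\lambda) = (-1)^{n} \cdot T(\lambda)$ holds. There is no real obstacle here: the substantive work has been carried out in Proposition~\ref{proposition_S_lambda_increasing} (where the shelling decomposition of $\Sigma(\overline{\lambda})$ was used to collapse $S(\lambda)$ to a single homology contribution) and in Lemma~\ref{lemma_T_two_statements} (where $T(\lambda)$ was evaluated by an induction on matchings). The corollary is just the bookkeeping that matches these evaluations together.
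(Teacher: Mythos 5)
Your proposal is correct and is essentially the paper's own proof: the paper simply states that the corollary follows "by combining Proposition~\ref{proposition_S_lambda_increasing} and Lemma~\ref{lemma_T_two_statements}," and your case split on the sign of $\lambda_{1}$ (using monotonicity to see that $\lambda_{1}>0$ forces all entries positive) is exactly the intended bookkeeping.
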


\section{Permuting the entries of $\lambda$}
\label{section_permute}

We now give a way to reduce the general case to the case treated
in the previous section,
assuming that we know the identity for smaller values of $n$.
This reduction is
directly inspired by Herb's paper~\cite{Herb} 
on discrete series characters. 

Let $\lambda \in \Rrr^{n}$
with no ordering hypothesis on $\lambda$.
Suppose that $n \geq 2$, and fix $1 \leq i \leq n-1$.
Recall that $s_{i}$ is the simple transposition $(i,i+1)$
in the symmetric group $\Sym_{n}$.
Also, we write
$\mu=(\lambda_{1},\ldots,\lambda_{i-1},\lambda_{i+2},\ldots,\lambda_{n})\in\R^{n-2}$.

\begin{proposition}
The following two identities hold:
\begin{align}
S(\lambda) + S(s_{i}\lambda)
& =
-2 \cdot \ungras_{\lambda_{i}+\lambda_{i+1}>0}\cdot S(\mu) ,
\label{equation_S} \\
T(\lambda) + T(s_{i}\lambda)
& =
2 \cdot \ungras_{\lambda_{i}+\lambda_{i+1}>0}\cdot T(\mu) .
\label{equation_T}
\end{align}
\label{proposition_S_equal_T}
\end{proposition}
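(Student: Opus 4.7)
\emph{Plan.} Both identities admit parallel proofs by sign-reversing involutions, driven by a preliminary symmetry obtained from a case check on the definition of $c_2$:
\[c_2(a,b)+c_2(b,a)=2\cdot\ungras_{a+b>0}\qquad(a,b\in\R).\]

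For \eqref{equation_T}, consider the involution $\iota$ on $M_n$ that relabels every vertex via the transposition $s_i$. A matching is $\iota$-fixed iff it contains the edge $\{i,i+1\}$; deleting this edge yields a sign-preserving bijection with matchings on $[n]\setminus\{i,i+1\}\cong[n-2]$ (the edge $\{i,i+1\}$ crosses no other edge, and for odd $n$ the isolated-vertex-sign factor is invariant under the induced relabeling). Combined with the preliminary symmetry applied to the edge $\{i,i+1\}$, the fixed-point contribution is exactly $2\ungras_{\lambda_i+\lambda_{i+1}>0}\cdot T(\mu)$. For non-fixed matchings, I verify by casework on how $i,i+1$ appear in $p$ that $(-1)^{\iota(p)}=-(-1)^p$ (either one crossing of the two edges containing $i,i+1$ flips parity, or the isolated-vertex-sign factor flips); combined with $c(\iota(p),\lambda)=c(p,s_i\lambda)$, each pair $\{p,\iota(p)\}$ contributes zero to $T(\lambda)+T(s_i\lambda)$.

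For \eqref{equation_S}, Lemma \ref{lemma_reordering} yields an isomorphism $\Sigma(\lambda)\iso\Sigma(s_i\lambda)$ via $\sigma\mapsto s_i\sigma$, so
\[S(\lambda)+S(s_i\lambda)=\sum_{\sigma\in\Sigma(\lambda)}(-1)^{|\sigma|}\bigl[(-1)^{g(\sigma)}+(-1)^{g(s_i\sigma)}\bigr].\]
The key observation is that, when $i$ and $i+1$ lie in different blocks of $\sigma$, we have $g(s_i\sigma)=s_i\cdot g(\sigma)$ in $\Sym_n$ (left multiplication, where $s_i$ swaps the \emph{values} $i$ and $i+1$), because sorting a block decreasingly commutes with this swap whenever at most one of $i,i+1$ lies in the block; on the other hand, when $i$ and $i+1$ lie in the same block, $s_i\sigma=\sigma$ identically. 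Hence the bracket vanishes in the different-block case and equals $2(-1)^{g(\sigma)}$ in the same-block case, reducing $S(\lambda)+S(s_i\lambda)$ to
\[2\sum_{\substack{\sigma\in\Sigma(\lambda) \\ i,i+1\text{ in the same block}}}(-1)^{|\sigma|}(-1)^{g(\sigma)}.\]
This residual sum splits according to whether the block containing $\{i,i+1\}$ equals $\{i,i+1\}$ (the \emph{simple} case) or strictly contains it (the \emph{composite} case). For simple $\sigma$, deletion of the block and relabeling of $[n]\setminus\{i,i+1\}\cong[n-2]$ gives a bijection with ordered partitions of $[n-2]$; a direct inversion count yields $(-1)^{|\sigma|}(-1)^{g(\sigma)}=(-1)^{|\tilde\sigma|}(-1)^{g(\tilde\sigma)}$ (the inserted pair $(i+1,i)$ contributes an odd number of inversions in any insertion position, canceling the parity shift from $|\sigma|=|\tilde\sigma|+1$), and the partial-sum conditions defining $\Sigma(\lambda)$ match those defining $\Sigma(\mu)$ precisely when $\lambda_i+\lambda_{i+1}>0$. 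For composite $\sigma$, Lemma \ref{lemma_split_block} gives a sign-reversing involution (splitting off or merging back the pair $\{i,i+1\}$) that cancels these contributions among themselves.

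\emph{Main obstacle.} The delicate step is the final analysis of the same-block sum: both the sign-reversing involution eliminating the composite contributions, and the careful verification of which ordered partitions of $[n]\setminus\{i,i+1\}$ correspond to $\sigma\in\Sigma(\lambda)$ (as the partial-sum conditions transform under insertion or deletion of the pair $\{i,i+1\}$), require close accounting of signs.
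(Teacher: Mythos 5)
Your treatment of \eqref{equation_T} is essentially the paper's argument and is sound: the involution $p\mapsto s_ip$ on $M_n$, the sign flip on non-fixed matchings, and the identity $c_2(a,b)+c_2(b,a)=2\cdot\ungras_{a+b>0}$ applied to the fixed matchings (those containing the edge $\{i,i+1\}$) are exactly the steps used there. Your reduction of \eqref{equation_S} to twice the sum over those $\sigma\in\Sigma(\lambda)$ in which $i$ and $i+1$ share a block also matches the paper. The gaps are in your analysis of that residual sum. First, the ``composite'' contributions do not cancel among themselves: splitting $\{i,i+1\}$ off from its block produces an ordered partition in which $\{i,i+1\}$ is its own block, i.e.\ a \emph{simple} one, so your proposed involution does not map the composite set to itself. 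Second, the simple ordered partitions are not in bijection with $\Sigma(\mu)$: deleting the block $\{i,i+1\}$ is many-to-one (the block may occupy any position), and for a non-final position the membership conditions do not transform correctly --- the partial sums of $\sigma$ taken past the block $\{i,i+1\}$ exceed the corresponding partial sums of $\tilde\sigma$ by $\lambda_i+\lambda_{i+1}$, so $\sigma\in\Sigma(\lambda)$ does not force $\tilde\sigma\in\Sigma(\mu)$. The correct bookkeeping, which is what the paper does, is: when $\lambda_i+\lambda_{i+1}>0$, pair each composite $\sigma$ with the simple ordered partition obtained by splitting $\{i,i+1\}$ off to the \emph{left} of the remainder of its block (Lemma~\ref{lemma_split_block} keeps you inside $\Sigma(\lambda)$); this pairing is sign-reversing, its unmatched elements are exactly the ordered partitions whose \emph{last} block is $\{i,i+1\}$, and only those are in bijection with $\Sigma(\mu)$. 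When $\lambda_i+\lambda_{i+1}\le 0$, split to the \emph{right} instead; since no element of $\Sigma(\lambda)$ can begin with the block $\{i,i+1\}$, the pairing is then perfect and the residual sum vanishes --- a case your write-up never actually handles.

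There is one further point you must resolve. Your (correct) observation that appending the descending pair $i+1,\,i$ creates an odd number of inversions, combined with $|\sigma|=|\tilde\sigma|+1$, makes the contribution of a terminal-$\{i,i+1\}$ ordered partition \emph{equal} to that of its image in $\Sigma(\mu)$; so the argument you sketch, once the matching is repaired as above, yields $S(\lambda)+S(s_i\lambda)=+2\cdot\ungras_{\lambda_i+\lambda_{i+1}>0}\cdot S(\mu)$ rather than the displayed $-2$. A direct check at $n=3$, $\lambda=(1,1,1)$, $i=1$ (where $S(\lambda)=-1$ and $S(\mu)=S((1))=-1$) gives $S(\lambda)+S(s_1\lambda)=-2=+2\cdot S(\mu)$, so the $+$ sign is the true one; it is also the sign the induction in the proof of Theorem~\ref{theorem_S_equal_T} needs, since substituting $S(\mu)=(-1)^{n-2}T(\mu)$ turns $+2\cdot\ungras\cdot S(\mu)$ into $(-1)^n\bigl(T(\lambda)+T(s_i\lambda)\bigr)$. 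Make sure the identity you finally assert is the one your computation actually proves.
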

\begin{proof}
We begin by proving~\eqref{equation_S}.
For $\sigma \in \Sigma_{n}$, let $s_{i}\sigma$ denote the
ordered partition where we exchange the elements $i$ and $i+1$.
Note that $s_{i}$ is an involution on $\Sigma_{n}$
and that it preserves the number of blocks.
We write
$\Sigma(\lambda) =\Sigma^{\prime}(\lambda) \sqcup \Sigma^{\prime\prime}(\lambda)$,
where $\Sigma^{\prime\prime}(\lambda)$ is the set of fixed points of $s_{i}$
in~$\Sigma(\lambda)$,
that is, 
$\Sigma^{\prime}(\lambda)$ is the set of $\sigma\in\Sigma(\lambda)$
such that $i$ and $i+1$ are in different blocks of $\sigma$
and
$\Sigma^{\prime\prime}(\lambda)$ is the set of $\sigma\in\Sigma(\lambda)$
such that $i$ and $i+1$ are in the same block of $\sigma$.

Note that the action of $s_{i}$ gives a bijection between
$\Sigma^{\prime}(\lambda)$ and $\Sigma^{\prime}(s_{i} \lambda)$.
Furthermore, for an ordered partition $\sigma$ in $\Sigma^{\prime}(\lambda)$
we have
$g(s_{i}\sigma) = s_{i}\cdot g(\sigma)$.
We obtain
\begin{align}
\sum_{\sigma \in \Sigma^{\prime}(\lambda)}
(-1)^{|\sigma|} \cdot (-1)^{g(\sigma)}
+
\sum_{\sigma \in \Sigma^{\prime}(s_{i}(\lambda))}
(-1)^{|\sigma|} \cdot (-1)^{g(\sigma)}
& = 0 
.
\label{equation_Sigma_prime}
\end{align}
We next consider $\Sigma^{\prime\prime}(\lambda)$.
Note that $\Sigma^{\prime\prime}(\lambda) =\Sigma^{\prime\prime}(s_{i}\lambda)$, so 
it remains to show that
\begin{align}
\sum_{\sigma\in\Sigma^{\prime\prime}(\lambda)}
(-1)^{|\sigma|} \cdot (-1)^{g(\sigma)}
& =
-\ungras_{\lambda_{i}+\lambda_{i+1}>0}\cdot S(\mu).
\label{equation_Sigma_prime_prime}
\end{align}

Suppose first that $\lambda_{i}+\lambda_{i+1}\leq 0$.
Define a matching on $\Sigma^{\prime\prime}(\lambda)$ in the following manner.
If $\sigma=(C_{1},C_{2},\ldots,C_{r}) \in \Sigma^{\prime\prime}(\lambda)$
satisfies
$i, i+1 \in C_{s}$ and $|C_{s}|\geq 3$, match it with
$$ \sigma^{\prime}
=
(C_{1},\ldots,C_{s-1},C_{s}-\{i,i+1\},\{i,i+1\},C_{s+1},\ldots,C_{r}) . $$
The hypothesis that $\lambda_{i}+\lambda_{i+1} \leq 0$
and Lemma~\ref{lemma_split_block}
imply that 
$\sigma^{\prime} \in \Sigma^{\prime\prime}(\lambda)$.
Furthermore, this is a perfect matching
since no ordered partition 
in $\Sigma^{\prime\prime}(\lambda)$
can begin with the block $\{i,i+1\}$.
It is easy to see that 
$(-1)^{g(\sigma)}=(-1)^{g(\sigma^{\prime})}$ if $\sigma$ and $\sigma^{\prime}$ are matched.
So we obtain
\begin{align*}
\sum_{\sigma\in\Sigma^{\prime\prime}(\lambda)}
(-1)^{|\sigma|} \cdot (-1)^{g(\sigma)}
&=0.
\end{align*}

Now assume that $\lambda_{i}+\lambda_{i+1}>0$.
We consider another matching on $\Sigma^{\prime\prime}(\lambda)$, defined as follows.
If $\sigma=(C_{1}, C_{2}, \ldots, C_{r})\in\Sigma^{\prime\prime}(\lambda)$
satisfies $i, i+1 \in C_{s}$ and $|C_{s}| \geq 3$, match it with
$$ \sigma^{\prime}
= 
(C_{1},\ldots,C_{s-1},\{i,i+1\},C_{s}-\{i,i+1\},C_{s+1},\ldots,C_{r}). $$
The hypothesis that $\lambda_{i}+\lambda_{i+1} > 0$
and Lemma~\ref{lemma_split_block}
imply that 
$\sigma^{\prime} \in \Sigma^{\prime\prime}(\lambda)$.
The unmatched elements
are the ordered partitions whose last block is $\{i, i+1\}$.
Again, it is straightforward to see that 
$(-1)^{g(\sigma)}=(-1)^{g(\sigma^{\prime})}$ if $\sigma$ and $\sigma^{\prime}$ are matched.
Denote by $\Sigma^{\prime\prime\prime}(\lambda)$ the set of unmatched elements
in~$\Sigma^{\prime\prime}(\lambda)$.
We obtain
$$ \sum_{\sigma\in\Sigma^{\prime\prime}(\lambda)}
(-1)^{|\sigma|} \cdot (-1)^{g(\sigma)}
=
\sum_{\sigma\in\Sigma^{\prime\prime\prime}(\lambda)}
(-1)^{|\sigma|} \cdot (-1)^{g(\sigma)} . 
$$
Identify
$[n-2]$ and $[n]-\{i,i+1\}$
using the unique order-preserving bijection between these sets.
We note that
$\sigma=(C_{1}, \ldots, C_{r},\{i,i+1\})\in\Sigma_{n}$
is an element of $\Sigma(\lambda)$ if and only if
$\tau := (C_{1},\ldots,C_{r})$ is an element of $\Sigma(\mu)$.
This induces a bijection $\Sigma^{\prime\prime\prime}(\lambda)\iso\Sigma(\mu)$.
Also, we have $|\sigma|=|\tau|+1$ and $(-1)^{g(\sigma)}=(-1)^{g(\tau)}$.
So finally we find that
\begin{align*}
\sum_{\sigma\in\Sigma^{\prime\prime\prime}(\lambda)}
(-1)^{|\sigma|} \cdot (-1)^{g(\sigma)}
&=
-
\sum_{\tau\in\Sigma(\mu)}
(-1)^{|\tau|} \cdot (-1)^{g(\tau)}
=
-S(\mu).
\end{align*}
Identity~\eqref{equation_S}
follows by combining equation~\eqref{equation_Sigma_prime} 
with twice equation~\eqref{equation_Sigma_prime_prime}
in the two cases
$\Sigma^{\prime\prime}(\lambda)$ and $\Sigma^{\prime\prime}(s_{i} \lambda)$.

Identity~\eqref{equation_T} 
is proved as in Case~I of Lemma~2.14 in~\cite{Herb}.
Let $M_{n}^{\prime\prime}$ be the set
of maximal matchings in $M_{n}$ where
$\{i,i+1\}$ is an edge and
let $M_{n}^{\prime}$ be the complement,
that is, where the vertices $i$ and $i+1$ are not matched with each other.

For $p \in M_{n}$, let $s_{i}p$ denote the
matching where we exchange the vertices $i$ and $i+1$.
Note that $s_{i}$ is an involution on the
set $M_{n}$
and that $M_{n}^{\prime\prime}$ is the set of fixed points
of $s_{i}$.
Furthermore, for a matching $p \in M_{n}^{\prime}$
we have
$(-1)^{s_{i} p} = - (-1)^{p}$
and
$c(s_{i}p, s_{i} \lambda) = c(p, \lambda)$.
Hence we obtain
\begin{align}
\sum_{p \in M_{n}^{\prime}} (-1)^{p} \cdot c(p, \lambda)
+
\sum_{p \in M_{n}^{\prime}} (-1)^{p} \cdot c(p, s_{i} \lambda)
= 0 .
\label{equation_M_prime}
\end{align}
Again we use
the unique order-preserving bijection $[n-2] \iso [n] - \{i,i+1\}$.
Thus if $p$ is a matching in $M_{n}^{\prime\prime}$, we can view
$p-\{i,i+1\}$ as a matching
on the vertex set $[n-2]$, which we will denote by~$q$.
This gives a bijection between
$M_{n}^{\prime\prime}$
and $M_{n-2}$.
Note that the two matchings $p$ and $q$ have the same sign,
that is, $(-1)^{p} = (-1)^{q}$.
Furthermore
\begin{align*}
c(p,\lambda) + c(p,s_{i}\lambda) 
& =
(c_{2}(\lambda_{i},\lambda_{i+1}) + c_{2}(\lambda_{i+1},\lambda_{i}))
\cdot
c(q,\mu) \\
& =
2 \cdot \ungras_{\lambda_{i}+\lambda_{i+1} > 0}
\cdot
c(q,\mu) .
\end{align*}
Now summing over all $p \in M_{n}^{\prime\prime}$
we obtain
\begin{align}
\sum_{p \in M_{n}^{\prime\prime}}
(-1)^{p} \cdot c(p, \lambda)
+
\sum_{p \in M_{n}^{\prime\prime}}
(-1)^{p} \cdot c(p, s_{i} \lambda)
& = 
2 \cdot \ungras_{\lambda_{i}+\lambda_{i+1} > 0}
\cdot
\sum_{q \in M_{n-2}} (-1)^{q} \cdot c(q, \mu) .
\label{equation_M_prime_prime}
\end{align}
Identity~\eqref{equation_T} 
now follows by summing equations~\eqref{equation_M_prime}
and~\eqref{equation_M_prime_prime}.
\end{proof}

We now include the proof of Theorem~\ref{theorem_S_equal_T}. 

\begin{proof}[Proof of Theorem~\ref{theorem_S_equal_T}.]
We proceed by induction on $n \geq 1$.
The induction basis is $n \leq 2$, and is straightforward to verify.
Assume the theorem is true for $n-2 \geq 1$ and let us prove it for~$n$.
Proposition~\ref{proposition_S_equal_T} and the induction hypothesis imply that for
every $\lambda \in \Rrr^{n}$ and $i \in [n-1]$ 
the theorem is true for $\lambda$
if and only if it is true for $s_{i}\lambda$.
As the transpositions $s_{1}, s_{2}, \ldots, s_{n-1}$
generate the symmetric group $\Sym_{n}$, we deduce 
that the theorem is true for
$\lambda$ if and only if there exists 
a permutation $\tau\in\Sym_{n}$ such that the theorem is
true for $\tau\lambda$.
But for every $\lambda \in \Rrr^{n}$, we can find a $\tau \in \Sym_{n}$
such that the entries of $\tau\lambda$ are in weakly increasing order. 
The
result for $\tau\lambda$ is exactly Corollary~\ref{corollary_base_case}, and so we
finally deduce the result for our original $\lambda$,
completing the induction step.
\end{proof}

\section{An expression for decreasing $\lambda$}
\label{section_expression}

The base case of the proof of
Theorem~\ref{theorem_S_equal_T}
is when the sequence $\lambda$ is
weakly increasing. In this section
we consider the other extreme,
that is, when
the sequence $\lambda$ is weakly decreasing.
We give an expression for $S(\lambda)$
using a permutation statistic
on the facets of $\Sigma(\lambda)$,
that is, the set $\mathcal{A}(\lambda)$.

We begin by defining two sequences
$a_{i}$ and $b_{i}$ by
$a_{i}  = (-1)^{\binom{i}{2} + 1}$
for $i \geq 1$,
$b_{0} = b_{1} = 1$ and $b_{i} = 2$ for $i \geq 2$.
\begin{lemma}
The following identity holds:
$$ \sum_{\vec{c} \in \Comp(n)}
a_{c_{1}} \cdot a_{c_{2}} \cdots a_{c_{k}}
=
(-1)^{n} \cdot b_{n} , $$
where the sum is over all compositions
$\vec{c} = (c_{1}, c_{2}, \ldots, c_{k})$
of $n$.
\label{lemma_generating_function_proof}
\end{lemma}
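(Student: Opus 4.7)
The plan is to prove the identity via generating functions. Let
$$A(x) = \sum_{i \geq 1} a_{i} \cdot x^{i}.$$
Since a composition $\vec{c} = (c_{1}, \ldots, c_{k})$ of $n$ is simply an ordered tuple of positive integers summing to $n$, the ordinary generating function of the left-hand side (summed over all $n \geq 0$, with the empty composition contributing $1$ to the $n=0$ term) is the familiar compositional inverse-type identity
$$\sum_{n \geq 0}\left(\sum_{\vec{c} \in \Comp(n)} a_{c_{1}} \cdots a_{c_{k}}\right) x^{n} = \frac{1}{1 - A(x)}.$$
Thus the lemma will follow if I can show $1/(1-A(x)) = \sum_{n \geq 0} (-1)^{n} b_{n} x^{n}$.

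The first step is to compute $A(x)$ in closed form. I would observe that $(-1)^{\binom{i}{2}+1}$ is periodic modulo $4$, taking the values $-1, 1, 1, -1$ for $i \equiv 1, 2, 3, 0 \pmod{4}$ respectively. Splitting the series $A(x)$ into four geometric subseries according to the residue class of $i$ mod $4$ and summing, one obtains
$$A(x) = \frac{x^{2} - x}{1 + x^{2}}.$$

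The second step is a direct calculation:
$$1 - A(x) = \frac{(1+x^{2}) - (x^{2} - x)}{1 + x^{2}} = \frac{1 + x}{1 + x^{2}},$$
and therefore
$$\frac{1}{1 - A(x)} = \frac{1 + x^{2}}{1 + x} = (1 + x^{2}) \cdot \sum_{k \geq 0} (-1)^{k} \cdot x^{k}.$$
Reading off the coefficient of $x^{n}$ on the right-hand side gives $1$ when $n=0$, gives $-1$ when $n=1$, and gives $(-1)^{n} + (-1)^{n-2} = 2 (-1)^{n}$ when $n \geq 2$. This matches $(-1)^{n} b_{n}$ in all cases, completing the argument.

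The only real obstacle is the explicit computation of $A(x)$, and even that is routine once the period-$4$ behavior of $(-1)^{\binom{i}{2}+1}$ is noted; an alternative is to guess the closed form $A(x) = (x^{2} - x)/(1+x^{2})$ and verify it by checking that the expansion of the right-hand side recovers the first several values $-1, 1, 1, -1, -1, 1, 1, \ldots$ and that these satisfy the obvious $4$-term linear recurrence $a_{i+4} = a_{i}$ encoded in $(1 + x^{2}) A(x) = x^{2} - x$.
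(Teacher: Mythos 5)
Your proposal is correct and follows essentially the same route as the paper: the paper's proof likewise computes $a(x)=\sum_{n\geq 1}a_n x^n = -x(1-x)/(1+x^2)$ and then reads off the coefficients of $1/(1-a(x)) = (1+x^2)/(1+x)$. You simply supply more detail (the period-$4$ analysis and the composition-counting identity $\sum_k A(x)^k = 1/(1-A(x))$) than the paper's terse two-line verification.
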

\begin{proof}
It is enough to see that
\begin{align*}
a(x)
& =
\sum_{n \geq 1} a_{n} \cdot x^{n}
=
\frac{-x \cdot (1-x)}{1+x^{2}} \\
\intertext{and}
\frac{1}{1-a(x)}
& =
\frac{1+x^{2}}{1+x}
=
1 + \sum_{n \geq 1} (-1)^{n} \cdot b_{n} \cdot x^{n}.
\qedhere
\end{align*}
\end{proof}

\begin{proposition}
Let $\id = 12 \cdots n$ be the identity permutation.
Then the following identity holds:
$$
\sum_{\sigma \in [R(\id),\id]} (-1)^{|\sigma|} \cdot (-1)^{g(\sigma)}
=
(-1)^{n} \cdot b_{n} .
$$
\label{proposition_b_n}
\end{proposition}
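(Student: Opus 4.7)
The plan is to reduce this identity directly to Lemma~\ref{lemma_generating_function_proof} by identifying the interval $[R(\id),\id]$ with the set of compositions of $n$ and then computing $(-1)^{|\sigma|} (-1)^{g(\sigma)}$ explicitly for each such $\sigma$.

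First I would observe that since $\id = 12\cdots n$ consists of a single ascending run, $R(\id)$ is the ordered partition $([n])$ with one block. In the face poset of $\Sigma_{n}$ (the dual of $\Qn$), the interval $[R(\id),\id]$ therefore corresponds, under reversal of order, to the interval $[\id,([n])]$ in $\Qn$, which consists of all ordered partitions obtained by merging adjacent singleton blocks of $(\{1\},\{2\},\ldots,\{n\})$. The elements of this interval are exactly the ordered partitions of the form
\[
\sigma_{\vec{c}} = (\{1,\ldots,c_{1}\},\{c_{1}+1,\ldots,c_{1}+c_{2}\},\ldots,\{c_{1}+\cdots+c_{k-1}+1,\ldots,n\}),
\]
as $\vec{c}=(c_{1},\ldots,c_{k})$ ranges over $\Comp(n)$. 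This gives a bijection $[R(\id),\id] \longleftrightarrow \Comp(n)$ under which $|\sigma_{\vec{c}}| = k$ (the number of parts).

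Next I would compute $(-1)^{g(\sigma_{\vec{c}})}$. Within the $i$-th block of $\sigma_{\vec{c}}$, the elements form a contiguous interval of integers, and applying $g$ reverses them, producing $\binom{c_{i}}{2}$ inversions. Since the $i$-th block lies entirely below the $(i{+}1)$-st block elementwise, no inversions arise between blocks. Hence
\[
(-1)^{g(\sigma_{\vec{c}})} = \prod_{i=1}^{k} (-1)^{\binom{c_{i}}{2}},
\]
and therefore
\[
(-1)^{|\sigma_{\vec{c}}|} \cdot (-1)^{g(\sigma_{\vec{c}})} = \prod_{i=1}^{k} (-1)^{\binom{c_{i}}{2}+1} = \prod_{i=1}^{k} a_{c_{i}}.
\]

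Finally I would sum over $\vec{c} \in \Comp(n)$ and invoke Lemma~\ref{lemma_generating_function_proof}:
\[
\sum_{\sigma \in [R(\id),\id]} (-1)^{|\sigma|} \cdot (-1)^{g(\sigma)} = \sum_{\vec{c} \in \Comp(n)} a_{c_{1}} \cdots a_{c_{k}} = (-1)^{n} \cdot b_{n},
\]
which is the desired identity. There is essentially no obstacle here beyond correctly identifying the interval $[R(\id),\id]$ with compositions of $n$ and keeping track of signs; the combinatorial content has already been extracted into Lemma~\ref{lemma_generating_function_proof}.
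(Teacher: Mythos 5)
Your proposal is correct and follows essentially the same route as the paper: identify $[R(\id),\id]$ with $\Comp(n)$, compute $(-1)^{|\sigma|}(-1)^{g(\sigma)}=\prod_i a_{c_i}$, and apply Lemma~\ref{lemma_generating_function_proof}. The only difference is that you verify $(-1)^{g(\sigma_{\vec{c}})}=\prod_i(-1)^{\binom{c_i}{2}}$ directly by counting inversions, whereas the paper reads this off from the stated relation between the signs of $f(\sigma)$ and $g(\sigma)$ together with $f(\sigma_{\vec c})=\id$; both are fine.
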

\begin{proof}
The interval
$[R(\id),\id] \in \Sigma(\lambda)$
is isomorphic to the poset $\Comp(n)$.
Hence we view the elements of this interval as
compositions $\vec{c} = (c_{1}, c_{2}, \ldots, c_{k})$ of $n$
where $\sigma$ is the ordered partition
$\sigma = (C_{1},C_{2}, \ldots, C_{k})$
and
$C_{i}$ is the interval $[c_{1}+c_{2}+\cdots+c_{i-1}+1, c_{1}+c_{2}+\cdots+c_{i}]$.
Note that $|C_{i}| = c_{i}$.
Now the sum is given by
\begin{align*}
\sum_{\vec{c} \in \Comp(n)} (-1)^{k} \cdot  \prod_{i=1}^{k} (-1)^{\binom{c_{i}}{2}}
&=
\sum_{\vec{c} \in \Comp(n)} \prod_{i=1}^{k} (-1)^{\binom{c_{i}}{2} + 1}
=
\sum_{\vec{c} \in \Comp(n)} \prod_{i=1}^{k} a_{c_{i}}
=
(-1)^{n} \cdot b_{n} , 
\end{align*}
where the last equality is by Lemma~\ref{lemma_generating_function_proof}.
\end{proof}

For a permutation $\tau$ with
descent composition
$(c_{1}, c_{2}, \ldots, c_{k})$
define
$$  b(\tau) = \prod_{i=1}^{k} b_{c_{i}} .  $$
In other words, $b(\tau)$ is $2$ to the power of the number of maximal
ascent runs in $\tau$ which have size greater than or equal to $2$.
\begin{theorem}
When $\lambda\in\Rrr^n$ is such that 
$\lambdaweaklydecreasing$
the following identity holds:
$$
S(\lambda)
=
(-1)^{n}
\cdot 
\sum_{\tau \in \mathcal{A}(\lambda)} (-1)^{\tau} \cdot b(\tau)  .
$$
\end{theorem}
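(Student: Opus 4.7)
The plan is to apply the decomposition from Proposition~\ref{proposition_Sigma_lambda_decomposition} to the defining sum~\eqref{equation_S_lambda}, then for each $\tau\in\mathcal{A}(\lambda)$ evaluate the inner sum over the interval $[R(\tau),\tau]$ by factoring over the maximal ascending runs of $\tau$. The key step is to identify each such factor with the sum appearing in Proposition~\ref{proposition_b_n}.

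More precisely, since $\lambda_{1}\geq\cdots\geq\lambda_{n}$, Proposition~\ref{proposition_Sigma_lambda_decomposition} yields a \emph{disjoint} decomposition, so
\[
S(\lambda)=\sum_{\tau\in\mathcal{A}(\lambda)}\:\sum_{\sigma\in[R(\tau),\tau]}(-1)^{|\sigma|}\cdot(-1)^{g(\sigma)}.
\]
Fix $\tau$ with descent composition $(c_{1},\ldots,c_{k})$. By definition $R(\tau)$ is the ordered partition whose blocks are the maximal ascending runs of $\tau$, so an element $\sigma\in[R(\tau),\tau]$ is obtained by choosing, independently for each run, a composition $(d_{i,1},\ldots,d_{i,k_{i}})$ of $c_{i}$ which refines that run into consecutive sub-blocks. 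Inside the $i$-th run of $\tau$ the values are increasing, so for each sub-block of size $d_{i,j}$ the map $g$ reverses a sorted increasing segment of length $d_{i,j}$, contributing $\binom{d_{i,j}}{2}$ inversions on top of those already present in $\tau$. Consequently
\[
(-1)^{g(\sigma)}=(-1)^{\tau}\cdot\prod_{i,j}(-1)^{\binom{d_{i,j}}{2}},\qquad (-1)^{|\sigma|}=\prod_{i=1}^{k}(-1)^{k_{i}}.
\]

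Plugging these in, the sum over $\sigma\in[R(\tau),\tau]$ factors as a product over the $k$ runs, and for each run I get exactly the sum that was evaluated in Proposition~\ref{proposition_b_n} (via Lemma~\ref{lemma_generating_function_proof}):
\[
\sum_{\vec{d}\in\Comp(c_{i})}\prod_{j=1}^{k_{i}}(-1)^{\binom{d_{i,j}}{2}+1}=(-1)^{c_{i}}\cdot b_{c_{i}}.
\]
Multiplying these contributions and using $\sum_{i}c_{i}=n$ together with the definition $b(\tau)=\prod_{i}b_{c_{i}}$ gives
\[
\sum_{\sigma\in[R(\tau),\tau]}(-1)^{|\sigma|}\cdot(-1)^{g(\sigma)}=(-1)^{\tau}\cdot(-1)^{n}\cdot b(\tau),
\]
and summing over $\tau\in\mathcal{A}(\lambda)$ completes the proof.

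The routine verifications are the sign formula for $g(\sigma)$ in terms of $\tau$ and of the compositions, and the factorization of the interval $[R(\tau),\tau]$; the only nontrivial ingredient is Lemma~\ref{lemma_generating_function_proof}, which is already available. So I expect no real obstacle; the main care needed is in correctly tracking signs when reversing each ascending sub-block.
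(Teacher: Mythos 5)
Your proof is correct and follows essentially the same route as the paper: apply the decomposition of Proposition~\ref{proposition_Sigma_lambda_decomposition}, factor the sum over each interval $[R(\tau),\tau]$ according to the maximal ascending runs of $\tau$, track the sign of $g(\sigma)$ via the $\binom{d}{2}$ inversions created by reversing each block, and evaluate each run's factor as $(-1)^{c_i}\cdot b_{c_i}$ using Proposition~\ref{proposition_b_n} and Lemma~\ref{lemma_generating_function_proof}. No gaps.
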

\begin{proof}
Let $\tau$ be a permutation in $\mathcal{A}(\lambda)$
with descent composition $\vec{c} = (c_{1}, c_{2}, \ldots, c_{k})$.
Let $\tau^{(i)}$ be the $i$th descent run of the permutation $\tau$,
that is, $\tau^{(i)}$ is a partial permutation
and
$\tau$ can be written as the concatenation
of $\tau^{(1)}$ through $\tau^{(k)}$.
We sum over all $\sigma$ in the interval
$[R(\tau),\tau]$.
In order to do so, we write $\sigma$ as a concatenation
of ordered partitions
$\sigma^{(1)}$, 
$\sigma^{(2)}$, $\ldots$,
$\sigma^{(k)}$,
where
$\sigma^{(i)}$
is an ordered composition of the set of elements of $\tau^{(i)}$.
The ordered partition $\sigma^{(i)}$ belongs to the interval
$[R(\tau^{(i)}),\tau^{(i)}]$ in the ordered partition lattice defined
on the elements of $\tau^{(i)}$.

Let $\vec{d} = (d_{1}, d_{2}, \ldots, d_{m})$ be the block sizes of $\sigma$,
that is, $\type(\sigma) = \vec{d}$.
Note that $\vec{d} \leq \vec{c}$ in the poset $\Comp(n)$.
Hence we write
$\vec{d}$
as the concatenation
$\vec{d}^{(1)} \circ \vec{d}^{(2)} \circ \cdots  \circ \vec{d}^{(k)}$
where $\type(\sigma^{(i)}) = \vec{d}^{(i)}$.

Rewrite the sum over the elements in the interval
$[R(\tau),\tau]$ as follows:
\begin{align*}
\sum_{\sigma \in [R(\tau),\tau]} (-1)^{|\sigma|} \cdot (-1)^{g(\sigma)}
& =
\sum_{\sigma \in [R(\tau),\tau]}
(-1)^{f(\sigma)} \cdot 
(-1)^{|\sigma| + \sum_{j=1}^{m} \binom{d_{j}}{2}} \\
& =
(-1)^{\tau} \cdot 
\sum_{\sigma \in [R(\tau),\tau]}
\prod_{j=1}^{m} a_{d_{j}} \\
& =
(-1)^{\tau} \cdot 
\prod_{i=1}^{k}
\sum_{\sigma^{(i)} \in [R(\tau^{(i)}),\tau^{(i)}]}
\prod_{j=1}^{|\vec{d}^{(i)}|}
a_{d^{(i)}_{j}}
.
\end{align*}
By Proposition~\ref{proposition_b_n}
the sum is $(-1)^{c_{i}} \cdot b_{c_{i}}$.
Finally, the product is
$(-1)^{n} \cdot (-1)^{\tau} \cdot b(\tau)$.
By using
the decomposition of $\Sigma(\lambda)$ in
Proposition~\ref{proposition_Sigma_lambda_decomposition}
and
summing over all permutations $\tau$ in $\mathcal{A}(\lambda)$,
the result follows.
\end{proof}

\section*{Acknowledgements}

We thank the two referees for their comments and suggestions
on the exposition.
We would also like to thank 
the \'Ecole Normale Sup\'erieure de Lyon (\'ENS de Lyon)
for its hospitality and support to the second author during
the academic year 2017--2018
and to the first and third author
during one week visits to \'ENS de Lyon.
The first and third author also 
thank the Institute for Advanced Study in Princeton 
for hosting a research visit in Summer 2018.
This work was partially supported by grants from the
Simons Foundation
(\#429370 to Richard~Ehrenborg
and \#422467 to
Margaret Readdy).

\newcommand{\journal}[6]{{\sc #1,} #2, {\it #3} {\bf #4} (#5), #6.}
\newcommand{\book}[4]{{\sc #1,} ``#2,'' #3, #4.}
\newcommand{\bookf}[5]{{\sc #1,} ``#2,'' #3, #4, #5.}
\newcommand{\books}[6]{{\sc #1,} ``#2,'' #3, #4, #5, #6.}
\newcommand{\collection}[6]{{\sc #1,}  #2, #3, in {\it #4}, #5, #6.}
\newcommand{\thesis}[4]{{\sc #1,} ``#2,'' Doctoral dissertation, #3, #4.}
\newcommand{\springer}[4]{{\sc #1,} ``#2,'' Lecture Notes in Math.,
                          Vol.\ #3, Springer-Verlag, Berlin, #4.}
\newcommand{\preprint}[3]{{\sc #1,} #2, preprint #3.}
\newcommand{\preparation}[2]{{\sc #1,} #2, in preparation.}
\newcommand{\appear}[3]{{\sc #1,} #2, to appear in {\it #3}}
\newcommand{\submitted}[3]{{\sc #1,} #2, submitted to {\it #3}}
\newcommand{\JCTA}{J.\ Combin.\ Theory Ser.\ A}
\newcommand{\AdvancesinMathematics}{Adv.\ Math.}
\newcommand{\JournalofAlgebraicCombinatorics}{J.\ Algebraic Combin.}

\newcommand{\communication}[1]{{\sc #1,} personal communication.}




\bigskip

\small

\noindent
{\sc University of Kentucky,
Department of Mathematics,
Lexington, KY 40506.} \hfill\break
{\tt richard.ehrenborg@uky.edu}.

\vspace{2mm}
\noindent
{\sc Princeton University, 
Department of Mathematics,
Princeton, NJ 08540.} \hfill\break
{\tt smorel@math.princeton.edu}.

\vspace{2mm}
\noindent
{\sc University of Kentucky,
Department of Mathematics,
Lexington, KY 40506.}  \hfill\break
{\tt margaret.readdy@uky.edu}.

\end{document}